\theoremstyle{plain}
\newtheorem{theorem}{Theorem}[section]
\newtheorem{lemma}[theorem]{Lemma}
\theoremstyle{definition}
\theoremstyle{remark}
\DeclareMathOperator{\Cor}{Cor}
\DeclareMathOperator{\Sym}{Sym}
\DeclareMathOperator{\Aut}{Aut}
\DeclareMathOperator{\Out}{Out}
\DeclareMathOperator{\MA}{M\Gamma}
\DeclareMathOperator{\Id}{Id}
\DeclareMathOperator{\MM}{\mathcal{M}}
\DeclareFontFamily{OMX}{MnSymbolE}{}
\DeclareSymbolFont{MnLargeSymbols}{OMX}{MnSymbolE}{m}{n}
\DeclareFontShape{OMX}{MnSymbolE}{m}{n}{
    <-6>  MnSymbolE5
   <6-7>  MnSymbolE6
   <7-8>  MnSymbolE7
   <8-9>  MnSymbolE8
   <9-10> MnSymbolE9
  <10-12> MnSymbolE10
  <12->   MnSymbolE12
}{}
\DeclareFontShape{OMX}{MnSymbolE}{b}{n}{
    <-6>  MnSymbolE-Bold5
   <6-7>  MnSymbolE-Bold6
   <7-8>  MnSymbolE-Bold7
   <8-9>  MnSymbolE-Bold8
   <9-10> MnSymbolE-Bold9
  <10-12> MnSymbolE-Bold10
  <12->   MnSymbolE-Bold12
}{}
\let\llangle\@undefined
\let\rrangle\@undefined
\DeclareMathDelimiter{\llangle}{\mathopen}%
                     {MnLargeSymbols}{'164}{MnLargeSymbols}{'164}
\DeclareMathDelimiter{\rrangle}{\mathclose}%
                     {MnLargeSymbols}{'171}{MnLargeSymbols}{'171}
\title{On trialities and their absolute geometries}
\author{Dimitri Leemans}\thanks{This research was made possible thanks to an Action de Recherche Concert\'ee grant from the Communaut\'e Fran\c caise Wallonie-Bruxelles.}
\address{Universit\'e Libre de Bruxelles, D\'epartement de Math\'ematique, C.P.216 - Alg\`ebre et Combinatoire, Boulevard du Triomphe, 1050 Brussels, Belgium, Orcid number 0000-0002-4439-502X.}
\curraddr{}
\email{Leemans.Dimitri@ulb.be}
\urladdr{}
\author{Klara Stokes}
\address{Department of Mathematics and Mathematical Statistics, Ume\aa\; University,
901 87 Ume\aa, Sweden, Orcid number 0000-0002-5040-2089.}
\email{klara.stokes@umu.se}
\author{Philippe Tranchida}
\address{Universit\'e Libre de Bruxelles, D\'epartement de Math\'ematique, C.P.216 - Alg\`ebre et Combinatoire, Boulevard du Triomphe, 1050 Brussels, Belgium, Orcid number 0000-0003-0744-4934.}
\curraddr{}
\email{tranchida.philippe@gmail.com}
\urladdr{}
\date{\today}
\subjclass{51A10,51E24,20C33}{}
\keywords{Incidence geometry, triality, absolute geometry}
\begin{document}

\maketitle
\begin{abstract}
We introduce the notion of moving absolute geometry of a geometry with triality and show that, in the classical case where the triality is of type $(I_\sigma)$ and the absolute geometry is a generalized hexagon, the moving absolute geometry also gives interesting flag-transitive geometries with Buekenhout diagram
\begin{center}

    \begin{tikzpicture}
    
   \filldraw[black] (-3,0) circle (2pt)  node[anchor=north]{};
   \filldraw[black] (3,0) circle (2pt)  node[anchor=north]{};
    \draw (-3,0) -- (3,0) node [midway, above = 3pt, fill=white]{$5 \; \; \; \; \; 3 \; \; \; \; \; 6$};
    \end{tikzpicture}
    \end{center}
for the groups $G_2(k)$ and $^3D_4(k)$, for any integer $k \geq 2$. We also classify the classical absolute geometries for geometries with trialities but no dualities coming from maps of Class III with automorphism group $L_2(q^3)$, where $q$ is a power of a prime. We then investigate the moving absolute geometries for these geometries, illustrating their interest in this case.
\end{abstract}

\section{Introduction}
The notion of triality in geometry is an important concept that dates back from papers of Study (see~\cite[Page 435]{Porteous}, see also~\cite{Study1} and~\cite{Study2}\footnote{ See \url{http://neo-classical-physics.info/uploads/3/4/3/6/34363841/study-analytical_kinematics.pdf} 
 for an english translation of~\cite{Study2}.}).
His idea was to use a quadric living in a seven dimensional projective space to describe motions. He used the quadric $Q$ defined by the equation $X_0X_7+X_1X_6+X_2X_5+X_3X_4=0$ in homogeneous coordinates and showed that it features a nice property that
Cartan later on called triality~\cite{Cartan}, defining the phenomenon in the broader context of Lie groups. 
Freudenthal further explored triality in the context of Lie algebras \cite{Freudenthal}.
  
  In 1959, Tits classified in~\cite{tits1959trialite} the trialities with absolute points of the quadric $Q$.
He observed that the set of absolute points and the incident lines fixed by the triality form a rank two incidence geometry with the property that the girth of the incidence graph is two times the diameter. This discovery motivated his definition of generalized polygon -- the content of the appendix of~\cite{tits1959trialite}.

But what if, instead of taking lines fixed by the triality, we take lines that are moved by the triality?
This leads us to introduce the notion of a moving absolute geometry. Roughly speaking, the moving absolute geometry associated to a triality is a geometry of rank two with points and lines. The points are the absolute points (as in the classical absolute of Tits) and the lines are moving lines, meaning they are not fixed by the triality, but we require that they contain at least two absolute points.
It turns out that this moving absolute geometry also gives interesting rank two geometries in the classical case. We show that the moving absolute geometries in the classical cases are flag-transitive geometries for the groups $G_2(k)$ and $^3D_4(k)$, where $f$ is the cardinality of the underlying field and $k$ is $f$ for $G\cong G_2(k)$ and $f$ is the cube of $k$ for $G\cong$  $^3D_4(k)$. Moreover, we prove that their Buekenhout diagrams are as follows.

\begin{center}

    \begin{tikzpicture}
    
    \filldraw[black] (-3,0) circle (2pt)  node[anchor=north]{$k$};
    \filldraw[black] (3,0) circle (2pt)  node[anchor=north]{$(k+1)f^2-1$};
    \draw (-3,0) -- (3,0) node [midway, above = 3pt, fill=white]{$5 \; \; \; \; \; 3 \; \; \; \; \; 6$};
    \end{tikzpicture}
    \end{center}

We also revisit the geometries with trialities constructed in~\cite{leemans2022incidence}. We show that their absolute geometries are basically unions of paths of length two and we compute the moving absolute geometries of some of them, finding more interesting geometries. The absolute geometries of thin geometries, or more generally of geometries with rather small residues of rank $2$, will often be very poorly connected (see Theorem \ref{thm: absoluteMaps} for an example of such behaviour). To get richer geometries from a triality in that context, the moving absolute geometry is then a good candidate.

\section{Preliminaries} \label{sec:prelim}
\subsection{ Incidence and Coset Geometries}

To their core, most of the geometric objects of interest to mathematicians are composed of elements together with some relation between them. This very general notion is made precise by the notion of an incidence system, or an incidence geometry.

    A triple $\Gamma = (X,*,\tau)$ is called an \textit{incidence system} over $I$ if
    \begin{enumerate}
        \item $X$ is a set whose elements are called the \textit{elements} of $\Gamma$
        \item $*$ is a symmetric and reflexive relation on $X$. It is called the \textit{incidence relation} of $\Gamma$.
        \item $\tau$ is a map from $X$ to $I$, called the \textit{type map} of $\Gamma$, such that distinct elements $x,y \in X$ with $x * y$ satisfy $\tau(x) \neq \tau(y)$. Elements of $\tau^{-1}(i)$ are called the elements of type $i$.
    \end{enumerate}

The \textit{rank} of $\Gamma$ is the cardinality of the type set $I$.
A \textit{flag} in an incidence system $\Gamma$ over $I$ is a set of pairwise incident elements. The type of a flag $F$ is $t(F)$, that is the set of types of the elements of $F.$ A \textit{chamber} is a flag of type $I$. An incidence system $\Gamma$ is an \textit{incidence geometry} if all its maximal flags are chambers.

Let $F$ be a flag of $\Gamma$. An element $x\in X$ is {\em incident} to $F$ if $x*y$ for all $y\in F$. The \textit{residue} of $\Gamma$ with respect to $F$, denoted by $\Gamma_F$, is the incidence system formed by all the elements of $\Gamma$ incident to $F$ but not in $F$. The \textit{rank} of a residue is equal to rank$(\Gamma)$ - $|F|$.

The \textit{incidence graph} of $\Gamma$ is a graph with vertex set $X$ and where two elements $x$ and $y$ are connected by an edge if and only if $x * y$. Whenever we talk about the distance between two elements $x$ and $y$ of a geometry $\Gamma$, we mean the distance in the incidence graph of $\Gamma$ and simply denote it by $d_\Gamma(x,y)$, or even $d(x,y)$ if the context allows.

Let $\Gamma = \Gamma(X,*,\tau)$ be an incidence geometry over the type set $I$. A correlation of $\Gamma$ is a bijection $\phi$ of $X$ respecting the incidence relation $*$ and such that, for every $x,y \in X$, if $\tau(x) = \tau(y)$ then $\tau(\phi(x)) = \tau(\phi(y))$. If, moreover, $\phi$ fixes the types of every element (i.e $\tau(\phi(x)) = \tau(x)$ for all $x \in X$), then $\phi$ is said to be an automorphism of $\Gamma$. The \emph{type} of a correlation $\phi$ is the permutation it induces on the type set $I$. A correlation of type $(i,j)$ is called a duality if it has order $2$. A correlation of type $(i,j,k)$ is called a triality if it has order $3$. The group of all correlations of $\Gamma$ is denoted by $\Cor(\Gamma)$ and the automorphism group of $\Gamma$ is denoted by $\Aut(\Gamma)$. Remark that $\Aut(\Gamma)$ is a normal subgroup of $\Cor(\Gamma)$ since it is the kernel of the action of $\Cor(\Gamma)$ on $I$.

Incidence geometries can be obtained from a group $G$ together with a set $(G_i)_{i \in I}$ of subgroups of $G$. 

    The \emph{coset geometry} $\Gamma(G,(G_i)_{i \in I})$ is the incidence geometry over the type set $I$ where:
    \begin{enumerate}
        \item The elements of type $i \in I$ are right cosets of the form $G_i \cdot g$, $g \in G$.
        \item The incidence relation is given by non empty intersection. More precisely, the element $G_i \cdot g$ is incident to the element $G_j \cdot k$ if and only if $i\neq j$ and $G_i \cdot g \cap G_j \cdot k \neq \emptyset$.
    \end{enumerate}

A lot of properties of incidence geometries, such as connectedness, residual connectedness, residues, flag-transitivity, and so on, can be translated to group theoretical properties of $(G,(G_i)_{i \in I})$ (see \cite{buekenhout2013diagram} for a more detailed exposition).

Francis Buekenhout introduced in~\cite{buek} a new diagram associated to $\Gamma$. His idea was to associate to each rank two residue a set of three integers giving information on its incidence graph.
Let $\Gamma$ be a rank $2$ geometry. We can consider $\Gamma$ to have type set $I = \{P,L\}$, standing for points and lines. The {\em point-diameter}, denoted by $d_P(\Gamma) = d_P$, is the largest integer $k$ such that there exists a point $p \in P$ and an element $x \in \Gamma$ such that $d(p,x) = k$. Similarly the {\em line-diameter}, denoted by $d_L(\Gamma) = d_L$, is the largest integer $k$ such that there exists a line $l \in L$ and an element $x \in \Gamma$ such that $d(l,x) = k$. Finally, the \textit{gonality} of $\Gamma$, denoted by $g(\Gamma) = g$ is half the length of the smallest circuit in the incidence graph of $\Gamma$.

If a rank $2$ geometry $\Gamma$ has $d_P = d_L = g = n$ for some natural number $n$, we say that it is a \textit{generalized $n$-gon}. Generalized $2$-gons are also called generalized digons. They are in some sense trivial geometries since all points are incident to all lines. Their incidence graphs are complete bipartite graphs. Generalized $3$-gons are projective planes.

Let $\Gamma$ be a geometry over $I$.  The \textit{Buekenhout diagram} (or diagram for short) $D$ for $\Gamma$ is a graph whose vertex set is $I$. Each edge $\{i,j\}$ is labeled with a collection $D_{ij}$ of rank $2$ geometries. We say that $\Gamma$ belongs to $D$ if every residue of rank $2$ of type $\{i,j\}$ of $\Gamma$ is one of those listed in $D_{ij}$ for every pair of $i \neq j \in I$. In most cases, we use conventions to turn a diagram $D$ into a labeled graph. The most common convention is to not draw an edge between two vertices $i$ and $j$ if all residues of type $\{i,j\}$ are generalized digons, and to label the edge $\{i,j\}$ by a natural integer $n$ if all residues of type $\{i,j\}$ are generalized $n$-gons. It is also common to omit the label when $n=3$.
If the edge $\{i,j\}$ is labeled by a triple $(d_{ij},g_{ij},d_{ji})$ it means that every residue of type $\{i,j\}$ had $d_P = d_{ij}, g = g_{ij}, d_L = d_{ji}$. We can also add information to the vertices of a diagram. 
We can label the vertex $i$ with the number $n_i$ of elements of type $i$ in $\Gamma$. Moreover, if for all flags $F$ of co-type $i$, we have that $|\Gamma_F| = s_i +1$, we will also label the vertex $i$ with the integer $s_i$.

Let $\Gamma$ be an incidence geometry and let $\phi$ be a correlation of $\Gamma$. The action of this correlation $\phi$ on $\Gamma$ will induce a new geometry, called the absolute geometry of $\Gamma$ with respect to $\phi$.

    The \textit{absolute geometry} of $\Gamma$ with respect to $\phi$ is the incidence geometry $\Gamma_\phi = (X_\phi, *_\phi, \tau_\phi)$ over $J$ where
    \begin{enumerate}
        \item The set $J$ is the collection of all $\phi$-orbits $K$ on $I$ for which there exist invariant flags of type $K$;
        \item The set $X_\phi$ is the set of all non empty $\phi$-invariant flags of $\Gamma$;
        \item The relation $*_\phi$ is determined by $F *_\phi G$ if and only if $F \cup G$ is a flag of $\Gamma$;
        \item The function $\tau_\phi \colon X_\phi \to J$ is the map assigning to a minimal $\phi$-invariant flag $F$ the set of $\phi$-orbits in $\tau(F)$.
    \end{enumerate}

This concept of absolute geometry motivated Tits in \cite{tits1959trialite} to define generalized polygons.

\subsection{Maps}

 A \textit{map} $\mathcal{M}$ is a $2$-cell embedding of a graph into a closed surface. In other words, a map is composed of a set $V = V(\mathcal{M})$ of vertices, a set $E = E(\mathcal{M})$ of edges and finally a set $F = F(\mathcal{M})$ of faces, which are the simply connected components obtained by cutting the surface along $V \cup E$. A \textit{flag} $F$ of a map $\mathcal{M}$ is a triple $\{v,e,f\}$ with $v \in V, e \in E, f \in F$ and such that each element is incident to the two others. 

An \textit{automorphism} of a map $\mathcal{M}$ is a permutation of its elements preserving the three sets $V,E$ and $F$ and sending incident pairs to incident pairs and non-incident pairs to non-incident pairs. The group of all automorphisms of a map $\mathcal{M}$ is denoted by $\Aut(\mathcal{M})$. A map is said to be \textit{reflexible} if $\Aut(\mathcal{M})$ has only one orbit on the set of flags of $\mathcal{M}$.

One can always define three operation on the set of flags of a map $\mathcal{M}$. Let $F =\{v,e,f\}$ be a flag. Then there is exactly one flag $F_0$ of $\mathcal{M}$ that coincides with $F$ on the elements $e$ and $f$ but has a different vertex. Similarly there is a unique flag $F_1$, respectively $F_2$, coinciding with $F$ except for $e$, respectively $f$. The three operations, denoted by $\rho_0, \rho_1$ and $\rho_2$, send each flag to its unique $i$-adjacent flag, $i = 0,1,2$. It is easily seen that $\rho_0$ and $\rho_2$ always commute. In other words, there is always an action of the Coxeter group $C = \langle \rho_0, \rho_1 , \rho_2 \mid \rho_0^2 = \rho_1^2 = \rho_2^2 = (\rho_0 \rho_2)^2 = e  \rangle \cong V_4 * C_2$ on the set of flags of a map $\mathcal{M}$. If the map $\mathcal{M}$ is reflexible, its automorphism group $\Aut(\mathcal{M})$ is the quotient of $C$ by the stabilizer of a flag of $\mathcal{M}$. Conversely, given a finite quotient of $C$, it is possible to reconstruct a map $\mathcal{M}$ from the action of $C$ on its flags. This gives a correspondence between finite quotients of $C$ acting on sets and maps $\mathcal{M}$ on closed surfaces. This correspondance is functorial, in the sense that it sends $C$-equivariant maps to morphisms of maps, and vice-versa.
A reflexible map then corresponds to an epimorphism of $C$ to a finite group $G$.

Given a base map $\mathcal{M}$, there are some operations than one can apply to $\mathcal{M}$ to obtain new maps. One of them is called the \emph{dual operator} $D$ and comes from the classical notion of duality, on polytopes for example. The dual map $D(\mathcal{M})$ is obtained from $\mathcal{M}$ by switching the roles of vertices and faces. From the group theoretic perspective, the operator $D$ then simply exchanges $\rho_0$ and $\rho_2$. Another such operator is the \emph{Petrie dual operator} $P$. A \emph{Petrie path} is a "left-right" path in $\MM$. This means that the path turns once left at a vertex and next time it turns right, and so on, until it comes back to the starting vertex. The operator $P$ fixes the vertices and the edges of the map $\MM$ but replaces the faces. The map $D(\MM)$ is obtained by deleting the faces of $\MM$ and then, for every Petrie path, gluing a disk with boundary corresponding to the Petrie path. This corresponds to fixing $\rho_1$ and $\rho_2$ and sending $\rho_0$ to $\rho_0\rho_2$. One can then also consider composition of these two operators $D$ and $P$. Wilson showed that these two operators and their compositions form a group $\Sigma \cong S_3$ \cite{wilson1979operators}. The operators $D \circ P$ and $P \circ D$ are thus of order $3$. We will refer to these two operators or order $3$ as \emph{trialities} of the map $\MM$ and we will say that the map $\MM$ has trialities if $\MM$ is isomorphic to $ D \circ P (\MM)$ and $P \circ D (\MM)$.

Jones and Thornton showed that the group $\Sigma$ is the outer automorphism group ${\rm Out}(\Gamma)={\rm Aut}(\Gamma)/{\rm Inn}(\Gamma)\cong S_3$ \cite{jones1983operations}. 
The action of the six operators in $\Sigma$ on the generators of $C$ is showed in Figure \ref{classes}.

\begin{figure}
\begin{center}
\begin{picture}(135,160)
\put(50,150){$(\rho_0,\rho_1,\rho_2)$}
\put(0,100){$(\rho_2,\rho_1,\rho_0)$}
\put(75,140){\line(-2,-1){60}}
\put(25,130){$D$}
\put(75,140){\line(2,-1){60}}
\put(125,130){$P$}
\put(100,100){$(\rho_0\rho_2,\rho_1,\rho_2)$}
\put(15,90){\line(0,-1){30}}
\put(0,70){$P$}
\put(135,90){\line(0,-1){30}}
\put(140,70){$D$}
\put(0,50){$(\rho_0\rho_2,\rho_1,\rho_0)$}
\put(100,50){$(\rho_2,\rho_1,\rho_0\rho_2)$}
\put(15,40){\line(2,-1){60}}
\put(25,20){$D$}
\put(135,40){\line(-2,-1){60}}
\put(125,20){$P$}
\put(50,00){$(\rho_0,\rho_1,\rho_0\rho_2)$}
\end{picture}
\caption{Wilson's operations on the monodromy group of a map.}\label{classes}
\end{center}
\end{figure}
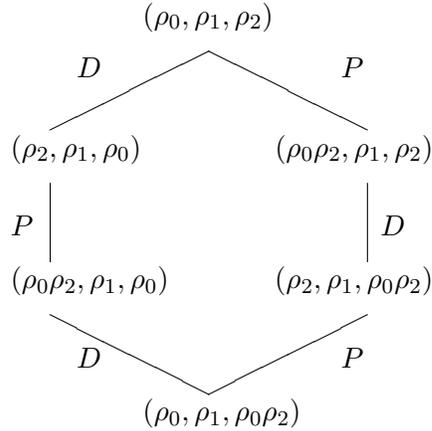

For a given map $\MM$, we can thus consider the set of $6$ maps obtained from $\MM$ by applying the operators of $\Sigma$. Some of these maps may be isomorphic to $\MM$ and some of them may not be. Wilson divided regular maps into four classes. A map $\MM$ is of Class I if $\Sigma(\MM)$ is formed of $6$ non isomorphic maps. It is of Class II is $\Sigma(\MM)$ splits into three pairs of isomorphic maps, of Class III if it splits into two triples of isomorphic maps and of Class IV if all $6$ maps are pairwise isomorphic. 

Jones and Thornton showed that every finite map has a finite reflexible cover of Class IV \cite{jones1983operations}. Richter, \v{S}ir\'a\v{n} and Wang proved that there is a Class IV map of every even valency~\cite{richter2012self}. 
This was later extended to odd valency $\geq 5$ by Fraser, Jeans and \v{S}ir\'a\v{n} \cite{fraser2018regular}. 
The kaleidoscopic maps due to Archdeacon, Conder and \v{S}ir\'a\v{n} are also, by definition, of Class IV \cite{archdeacon2014trinity}.

The first sporadic example of a map of Class III was constructed by Wilson in 1979 \cite{wilson1979operators}. 
It seemed to him at the time that maps of Class III are rare. As reported by Jones and Poulton, Conder found more sporadic examples of Class III maps in a computer search in 2006, but over 30 years passed before Jones and Poulton~\cite{jones2010maps} produced an infinite family of reflexible maps of Class III with automorphism group ${\rm L_2}(2^{3n})$, for $n$ a positive integer, thereby extending Wilson's  first example. In the same article Jones and Poulton also constructed maps of Class III as covers of other maps of Class III as well as parallel products.
In a recent paper Abrams and Ellis-Monaghan also constructed non-reflexible maps of Class III \cite{abrams2022new}. Finally, Leemans and Stokes also constructed \cite{leemans2022incidence} an infinite family of reflexible maps of Class III directly the simple groups $L_2(q^3)$ with $q = p^n$. The trialities of these maps come from the existence of the Frobenius automorphism.

\subsection{Quadric of type $D_4$ in $\mathbf{P}\mathbf{G}(7,\mathbb{F})$}\label{subsec:conic}

Let $\mathbf{Q}$ be an hyperbolic quadratic set in a projective space $\mathbf{P} = \mathbf{P}\mathbf{G}(7,\mathbb{F})$ of dimension $7$ over a field $\mathbb F$. The maximal subspaces of $\mathbf{Q}$ are of dimension $3$, as the index of $\mathbf{Q}$ is $4$. If we want to be more concrete, we can choose a set of homogeneous coordinates $\{ X_0,X_1,\cdots, X_7\} $ for $\mathbf{P}$ and fix the quadric $\mathbf{Q}$ to have equation 
\begin{equation} \label{eq:1}
X_0X_4 + X_1X_5 +X_2X_6 + X_3X_7 = 0
\end{equation}

We can define an equivalence relation on the set $M$ of maximal subspaces of $\mathbf{Q}$ as follows: for two subspaces $m,n \in M$, we set $m \equiv n$ if $m \cap n$ is of odd dimension. This relation is obviously reflexive and symmetric, and it can be shown to be also transitive. Moreover, there are exactly two equivalence classes of maximal subspaces, denoted by $M_1$ and $M_2$. If we suppose $\mathbf{Q}$ to have equation~(\ref{eq:1}), representatives for $M_1$ and $M_2$ can be taken to be the $3$-spaces obtained by setting $X_0 = X_2 = X_4 = X_6 =0$ and $X_1 = X_3 = X_5 =X_7 = 0$ respectively. We call the points of $\mathbf{Q}$ the $0$-points, the elements of $M_1$ as $1$-points and the elements of $M_2$ the $2$-points. 

We can then define a geometry $\Gamma$ of rank four on $\{0,1,2,3\}$.
The lines of $\mathbf{Q}$ are the elements of type 3.
The $i$-points (where $i= 0,1,2$) are the elements of type $i$. Incidence is given by symmetrized inclusion whenever it makes sense, and for a $1$-point $m_1$ and a $2$-point $m_2$, we set $m_1  * m_2$ if they intersect in a plane.
The geometry $\Gamma$ thus obtained can then be shown to have Buekenhout diagram $D_4$, see Figure \ref{D4}.

\begin{figure}
\begin{center}
\begin{tikzpicture}[scale = 1.5]
\label{D4}
\filldraw[black] (0,0) circle (2pt)  node[anchor=south]{};
\filldraw[black] (1,0) circle (2pt) node[anchor=west]{};
\filldraw[black] (-0.5,0.866) circle (2pt) node[anchor=east]{};
\filldraw[black] (-0.5,-0.866) circle (2pt) node[anchor=east]{};

\draw (1,0) -- (0,0) -- (-0.5,-0.866);
\draw (0,0) -- (-0.5,0.866);

\end{tikzpicture}
\caption{$D_4$ diagram.}   
\end{center}
\end{figure}
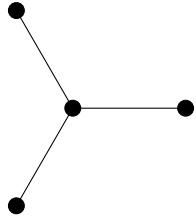

This geometry $\Gamma$ admits trialities that permute the $i$-points. Let $\alpha$ be such a triality. 
We can then consider the absolute geometry $\Gamma_\alpha$ of $\Gamma$ with respect to $\alpha$. We will call the points and the lines of $\Gamma_\alpha$ the \emph{absolute points} and \emph{absolute lines}. In \cite{tits1959trialite},Tits showed that, as long as there exists a cycle of absolute lines, the absolute $\Gamma_\alpha$ is a generalized hexagon.

Let $\sigma$ be an automorphism of the field $\mathbb{F}$ such that $\sigma^3 = \Id$. Tits classified the projective type of trialities $\tau$ of a projective plane $\pi$ over $\mathbb{F}$ into four categories, denoted by $(I_\sigma), (II), (III) $ and $(III)^\pm$ (see \cite{tits1959trialite}, section $2$). Surprisingly, there is a strong relation between trialities $\tau$ of $\pi$ and trialities $\alpha$ of $\mathbf{Q}$, that we briefly sketch here (for more details, see \cite{tits1959trialite}, section $4$).

We adopt the convention that upper case letters will designate points and lower case letters will designate lines. Also, if $P$ and $Q$ are two points, the line going through them will be designated by $(PQ)$. 
Suppose $P$ is a point of $\mathbf{Q}$ and let $P^\alpha$ and $P^{\alpha^2}$ be the associated incident $3$-spaces. We define $ P\omega$ to be the intersection $P^\alpha \cap P^{\alpha^2}$. If $P$ is not an absolute point, then $P\omega$ is a point. Instead, if $P$ is an absolute point, $ P\omega$ is then a plane, and we will sometimes refer to the plane $P \omega$ as the plane associated to $P$. Any plane that is the associated plane of some absolute point $P$ will be called a {\em special plane}. Similarly, any point of $\mathbf{Q}$ contained in a special plane is called a \emph{special point}. Notice that the plane $P \omega$ is spanned by any two absolute lines through $P$. This is true because all absolute points in $P^\alpha \cup P^{\alpha^2}$ are contained in $P \omega$.

Let $Q$ be a point of $\mathbf{Q}$ which is not special. Since $Q^\alpha$ and $ Q\omega$ are incident, it follows that $Q^{\alpha^2}$ and $( Q\omega)^\alpha$ are also incident. We define $Q\pi_1$ to be the plane $Q^{\alpha^2} \cap ( Q\omega)^\alpha$ and $Q\pi_2$ to be the plane $Q^\alpha \cap ( Q\omega)^{\alpha^2}$.

Under these notations, the triality $\alpha$ induces a map from the lines  of $\mathbf{Q}$ passing through $ Q\omega$ and contained in $Q^{\alpha^2}$ to the lines contained in $(Q\omega)^\alpha$ and passing by $Q$. We can then obtained a collineation $\alpha_Q$ from $Q \pi_1$ to itself as follows:

\begin{equation*}
    \alpha_Q(P) = Q\pi_1 \cap (P \cdot Q\omega)^\alpha
\end{equation*}
for any $P \in Q\pi_1$ and where $(P \cdot Q\omega)$ designates the line through $P$ and $Q\omega$.

Using the classification of trialities of a projective plane mentioned above, we can then say that a triality $\alpha$ of $\mathbf{Q}$ is of type $(I_\sigma), (II)$ or $(III)$ if there exists a non special point $Q$ of $\mathbf{Q}$ such that $\alpha_Q$ is of type $(I_\sigma), (II)$ or $(III)$, respectively. Tits showed  (see \cite{tits1959trialite}, section $5$) that all trialities $\alpha$ of type $(I_\sigma)$ with $\sigma \neq \Id$ are projectively equivalent and that all other trialities $\alpha$ are projectively equivalent to either a triality of type $(I_{\Id} )$ or a triality of type $(II)$. In this paper we will only consider the case where $\alpha$ is of type $(I_\sigma)$. While this is in many sense the more general case, it could be interesting to figure out what happens in the case where $\alpha$ is of type $(II)$.

We finish this section by a few useful tools and properties of $\mathbf{Q}$.

As in~\cite[Section 2.4.6]{van2012generalized},
let $V$ be an eight-dimensional vector space over $\mathbb{F}$. The fact that the points and the two types of $3$-spaces of $\mathbf{Q}$ play the same role can be expressed by the existence of a trilinear form $\mathcal{T} \colon V \times V \times V \to \mathbb{K}$. This form $\mathcal{T}$ is characterized by the fact that two points $(X,Y)$ of $\mathbf{Q}$ represent a $0$-point and a $1$-point of $\mathbf{Q}$ that are incident if and only if $\mathcal{T}(X,Y,Z)$ is identically zero as a function of $Z$. The same is true for any permutation of the letters $X,Y$ and $Z$.
This trilinear form has the following explicit description:

\begin{align}\label{trilinear}
\begin{split}
\mathcal{T}(X,Y,Z) =&   \begin{vmatrix}
     X_0& X_1 & X_2\\ 
     Y_0& Y_1 & Y_2\\
     Z_0& Z_1 & Z_2
\end{vmatrix}
+ \begin{vmatrix}
     X_4& X_4 & X_4\\ 
     Y_5& Y_5 & Y_5\\
     Z_6& Z_6 & Z_6
\end{vmatrix}
\\
& + X_3(Z_0Y_4 + Z_1Y_5 +Z_2Y_6) + X_7(Y_0Z_4 +Y_1Z_5 +Y_2Z_6) \\
& + Y_3(X_0Z_4 + X_1Z_5 +X_2Z_6) + Y_7(Z_0X_4 +Z_1X_5 +Z_2X_6) \\
& + Z_3(Y_0X_4 +Y_1X_5+Y_2X_6) + Z_7(X_0Y_4 +X_1Y_5 +X_2Y_6) \\
& - X_3Y_3Z_3 - X_7Y_7Z_7
\end{split}
\end{align}

When the triality is of type $(I_{\Id})$, it is well known that the absolute points are exactly the intersection of $\mathbf{Q}$ with the hyperplane of equation $X_3 + X_7 = 0$. We can thus substitute $X_7$ for $X_3$ and work with the parabolic quadric $\mathbf{Q'}$ in $\mathbf{P}'=\mathbf{P}\mathbf{G}(6,\mathbb{F})$ of equation

$$
X_0X_4+X_1X_5+X_2X_6 = X_3^2
$$
It can then be shown (see \cite[Section 2.4.13]{van2012generalized} for example) that the absolute lines of $\Gamma_\alpha$ have Grassmann coordinates satisfying the following six linear equations: 

\begin{equation}\label{Grassmann}
\begin{split}
X_{12} &= X_{34}, \qquad\qquad\qquad X_{54} = X_{32}, \qquad\qquad\qquad X_{20} = X_{35},\\
X_{65} &= X_{30}, \qquad\qquad\qquad X_{01} = X_{36}, \qquad\qquad\qquad X_{46} = X_{31},
\end{split}
\end{equation}
and conversely, every line on $\mathbf{Q}$ whose Grassmann coordinates satisfy (\ref{Grassmann}) is an absolute line of $\Gamma_\alpha$.
These equations will be used later to verify the existence of some lines of a new rank two geometry we will introduce in the next section as the moving absolute geometry of $\Gamma$.

Quadrics $\mathbf{Q}$ have an important property, called the \textit{all-or-one property}. This means that if $l$ is a line of $\mathbf{Q}$ and $P$ is a point of $\mathbf{Q}$ not contained in $l$, then there is either a unique line of $\mathbf{Q}$ passing through $P$ and intersecting $l$ or all lines passing through $P$ and intersecting $l$ are in $\mathbf{Q}$. This will often be useful in the proofs of the next section.

There is a natural group acting on both the absolute and the moving absolute geometry in this context. It is the group $G$ of collineations of $\mathbf{P}$ that preserves the triality $\alpha$. If $\alpha$ is a triality of type $(I_\sigma)$, as we will assume from now on, Tits showed that if $\sigma$ is the identity, then the group $G$ is of type $G_2$ and if $\sigma$ is not the identity then it is a twisted Lie group of type $D_4$, noted by $^3D_4$ (see~\cite[Section 8]{tits1959trialite}).

\section{moving absolute geometries}\label{sec:movingabsolute}

Let $\Gamma$ be a geometry with diagram $D_4$ (see figure \ref{D4}) and let $\alpha$ be a triality of $\Gamma$. The triality $\alpha$ must fix the central vertex of the diagram, and we will call an element that belongs to that fixed type \emph{a line}. We also choose one of the remaining types of $\Gamma$ and call elements belonging to that type \emph{points}. In this situation, we can define another type of absolute geometry for $\Gamma$ which has the same vertex set as the classical absolute geometry but considers lines that are moved by the triality instead of fixed lines. We will call such lines \emph{moving lines}, in contrast with the lines fixed by the triality which we will call \emph{absolute lines}.

    The \textit{moving absolute geometry} of $\Gamma$ with respects to $\alpha$ is the point-line geometry $\MA_\alpha$ over $J = \{P,L\}$ where
    \begin{enumerate}
   
        \item The points, called {\em absolute points}, are the points $p \in \Gamma$ such that $p, \alpha(p)$ and $\alpha^2(p)$ form a flag of $\Gamma$; each point has type $P$;
        \item The lines are the lines of $\Gamma$ that are not fixed by $\alpha$ and that contain at least two absolute points; each line has type $L$;
        \item Incidence is given by the incidence in $\Gamma$, that is, a point $p$ is incident to a line $l$ if $p$ and $l$ are incident in $\Gamma$. 
    \end{enumerate}
Remark that the definition of $\MA_\alpha$ does not depend, up to isomorphism, on the choice of points in $\Gamma$. Indeed suppose that we decide that the points of $\Gamma$ are now the elements of type $\alpha(P)$ instead. If $p$ and $p'$ are on a line $l$ in $\Gamma$, then $\alpha(p)$ and $\alpha(p')$ are incident to the line $\alpha(l)$. Therefore, both the lines and the incidence of $\MA_\alpha$ do not depend of the choice of points in $\Gamma$.

Whenever the triality $\alpha$ that we are working with is clear from context, we simplify the notation and talk about $\MA$ instead of $\MA_\alpha$. 

We now examine the moving absolute geometry of a quadric of type $D_4$ in a $7$-dimensional projective space.

\subsection{Quadrics of type $D_4$}

Recall that $\mathbf{Q}$ is a quadric in a $7$-dimensional projective space 
over a field $\mathbb{F}$ and that $\sigma$ is an automorphism of order $1$ or $3$ of $\mathbb{F}$ such that $\alpha$ is a triality of type $(I_\sigma)$ (see section \ref{subsec:conic}). 
Let $\mathbb{K}$ be the subfield of $\mathbb{F}$ fixed by $\sigma$. We will denote by $f$ and $k$ respectively the cardinalities of $\mathbb{F}$ and $\mathbb{K}$. Note that $f = k$ if $\sigma $ is the identity and $ f = k^3$ if $\sigma$ is not the identity. 

The main goal of this section is to prove that the moving absoulte $\MA$ in these settings is a geometry on two types with $d_P= 5, g =3$ and $d_l = 6$. The first results rules out the existence of some lines in $\mathbf{Q}$ and the proof is following the ideas of the proof of~\cite[Theorem 2.4.4]{van2012generalized}. Since the classical absolute geometry $\Gamma_\alpha$ is a generalized hexagon, there exist hexagons in $\mathbf{Q}$ whose vertices are all absolute points and whose lines are all absolute lines. We will call such an hexagon an \textit{absolute hexagon}.

\begin{lemma} \label{lem:oppositeVertices}
    Let $\mathbf{H}$ be an absolute hexagon. The line joining two opposite vertices of $\mathbf{H}$ is never a line of the conic $\mathbf{Q}$.
\end{lemma}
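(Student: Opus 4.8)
The plan is to argue by contradiction. Label the absolute hexagon as $P_1, l_1, P_2, l_2, \dots, P_6, l_6$, where the $P_i$ are absolute points, the $l_i=(P_iP_{i+1})$ are absolute lines (indices mod $6$), and fix the pair of opposite vertices $P_1,P_4$. Suppose that $m=(P_1P_4)$ is a line of $\mathbf{Q}$. Writing $\perp$ for the polarity of $\mathbf{Q}$, the line joining two points of $\mathbf{Q}$ lies on $\mathbf{Q}$ exactly when the two points are perpendicular; so the assumption says $P_1\perp P_4$. The whole strategy is to show that this perpendicularity, together with the perpendicularities forced by the hexagon, pushes $P_4$ into the special plane $P_1\omega$, which is incompatible with $P_1$ and $P_4$ being opposite.

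First I would record the collinearities coming from the hexagon. If two absolute points are at distance $4$ in $\Gamma_\alpha$, then they lie on the two absolute lines through their common neighbour, hence inside the special plane of that neighbour; since a special plane $P\omega=P^\alpha\cap P^{\alpha^2}$ is an intersection of two maximal subspaces of $\mathbf{Q}$ and is therefore contained in $\mathbf{Q}$, the two points are collinear on $\mathbf{Q}$. Applying this to the pairs $(P_2,P_4)$ and $(P_6,P_4)$ (with common neighbours $P_3$ and $P_5$) gives $P_2\perp P_4$ and $P_6\perp P_4$. Together with the assumed $P_1\perp P_4$ this shows that the plane $P_1\omega=\langle l_1,l_6\rangle=\langle P_1,P_2,P_6\rangle$ lies in $P_4^\perp$. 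Since $P_4\in\mathbf{Q}$ and $P_1\omega\subseteq\mathbf{Q}\cap P_4^\perp$, a one-line computation with the quadratic form (valid in every characteristic, since the cross terms $B(v,P_4)$ in the associated bilinear form $B$ vanish) shows that $S:=\langle P_1\omega,P_4\rangle$ is totally singular, i.e. $S\subseteq\mathbf{Q}$.

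The crux is then a parity argument using the two families $M_1,M_2$ of maximal subspaces. Assume first $P_4\notin P_1\omega$, so that $S$ has projective dimension $3$ and is a maximal subspace, lying in $M_1$ or $M_2$. The spaces $P_1^\alpha$ and $P_1^{\alpha^2}$ are maximal subspaces of opposite families (their intersection $P_1\omega$ has even projective dimension $2$), both containing $P_1\omega$. Whichever family $S$ belongs to, compare $S$ with the member of $\{P_1^\alpha,P_1^{\alpha^2}\}$ in that same family: two distinct maximal subspaces of the same family meet in odd projective dimension, but their intersection contains the plane $P_1\omega$ of even projective dimension $2$, so they cannot be distinct. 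Hence $S=P_1^\alpha$ or $S=P_1^{\alpha^2}$. But then $P_4$ is an absolute point lying in $P_1^\alpha\cup P_1^{\alpha^2}$, and by the property that all absolute points of $P_1^\alpha\cup P_1^{\alpha^2}$ lie in $P_1\omega$ we get $P_4\in P_1\omega$, contradicting $P_4\notin P_1\omega$. Therefore $P_4\in P_1\omega$; but then $(P_1P_4)$ is a line through $P_1$ inside the special plane, and in the classical thick case the pencil of lines through $P_1$ in $P_1\omega$ consists precisely of the absolute lines through $P_1$, so $(P_1P_4)$ would be absolute and $P_1,P_4$ would be at distance $2$ rather than opposite --- the final contradiction.

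I expect the main obstacle to be the family/parity step, since it is here that the specific $D_4$ structure (the existence of exactly two classes of maximal subspaces, distinguished by the parity of intersection dimensions) does the real work; getting the dimension bookkeeping right, and ruling out the degenerate possibility $P_4\in P_1\omega$ by identifying the line pencil of $P_1\omega$ with the absolute lines through $P_1$, are the two points that need the most care. As an alternative to the perpendicularity computation for producing the totally singular space $S$, one can instead reach the planes $\langle P_1,P_2,P_4\rangle$ and $\langle P_1,P_6,P_4\rangle\subseteq\mathbf{Q}$ directly from the all-or-one property (the two lines $m$ and $(P_2P_4)$ through $P_4$ already meet $l_1$, forcing the plane alternative), which matches the proof strategy of~\cite[Theorem 2.4.4]{van2012generalized}; the parity argument then finishes in the same way.
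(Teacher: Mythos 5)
Your argument is essentially correct, but it takes a genuinely different route from the paper. The paper's proof fixes a vertex $P$ and uses the all-or-one property twice (first on the lines from $P$ to the other vertices, then on the lines from $P$ to the span $S$ of the four sides not through $P$) to force the entire hexagon into a subspace $U$ of dimension $2$ or $3$, and then derives a contradiction from $U=P^\alpha=Q^\alpha$ in the first case and from a triangle of absolute lines in the second. You instead work with the polarity: the perpendicularities $P_2\perp P_4$, $P_6\perp P_4$ (forced by the special planes $P_3\omega$, $P_5\omega$) together with the assumed $P_1\perp P_4$ make $\langle P_1\omega,P_4\rangle$ totally singular, and the parity of intersections of maximal subspaces (the defining feature of the two families $M_1$, $M_2$) forces this $3$-space to be $P_1^\alpha$ or $P_1^{\alpha^2}$, whence $P_4\in P_1\omega$. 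Your approach is more computational but arguably tighter: it isolates exactly where the $D_4$ oriflamme structure is used, whereas the paper's step ``all lines between $P$ and $S$ are in $\mathbf{Q}$, hence the hexagon lies in a subspace of dimension $3$ or $2$'' leaves some dimension bookkeeping implicit. Your suggested alternative via the all-or-one property is in fact the route the paper takes.

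One inaccuracy needs repair. In your final step you assert that the pencil of lines through $P_1$ inside $P_1\omega$ consists precisely of the absolute lines through $P_1$. This holds only when $\sigma=\Id$ (the $G_2$ case, $f=k$); in the twisted case $f=k^3$ the plane $P_1\omega$ carries $f+1$ lines through $P_1$ of which only $k+1$ are absolute, so the claim is false as stated. The conclusion you need still follows, but from a different fact, namely the one quoted in the paper from Tits: every absolute point of $P_1\omega$ lies on one of the $k+1$ absolute lines through $P_1$. Since $P_4$ is absolute and (by your parity argument) lies in $P_1\omega$, the line $(P_1P_4)$ is one of these absolute lines, producing a circuit of length $8$ in the incidence graph of the generalized hexagon $\Gamma_\alpha$ and hence the desired contradiction with its gonality. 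With that substitution your proof is complete in both cases.
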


\begin{proof}
    Let $P$ be a vertex of $\mathbf{H}$ and let $l$ and $m$ be the two absolute lines of $\mathbf{H}$ meeting at $P$. Then the plane $P \omega$ is spanned by $l$ and $m$. Since the choice of $P$ was arbitrary, we conclude that the line between any two vertices at distance two of each other in $\mathbf{H}$ is always a line of $\mathbf{Q}$, and thus also a line of $\MA$ since it cannot be an absolute line. See Figure \ref{fig:MovingHexagon}, where examples of such lines are drawn in blue dashed lines. Now suppose that the line between a vertex $P$ of $\mathbf{H}$ and its opposite vertex $Q$ is a line of the conic $\mathbf{Q}$. Using the all-or-one property of $\mathbf{Q}$, we conclude that all the lines between $P$ and another point of $\mathbf{H}$ are in $\mathbf{Q}$. 
    Moreover, if we let $S$ be the space spanned by the $4$ lines of $\mathbf{H}$ not containing $P$, then using the all-or-one property again, we can conclude that all the lines between $P$ and $S$ are in $\mathbf{Q}$. Hence the whole hexagon $\mathbf{H}$ must be contained in a subspace $U$ of $\mathbf{H}$. Then $U$ must have dimension $3$ or $2$. If $U$ has dimension $3$, we can assume that $U = P^\alpha$. But we could have followed the same reasoning starting with $Q$, concluding that $U = Q^\alpha$, a contradiction. If $U$ has dimension $2$, it means the whole hexagon $\mathbf{H}$ is contained in a plane but then there must be $3$ absolute lines forming a triangle, which is not possible since the absolute geometry is a generalized hexagon.
\end{proof}
\begin{figure}
\centering
\begin{tikzpicture}

\filldraw[black] (4,0) circle (2pt)  node[anchor=west]{};
\filldraw[black] (-4,0) circle (2pt) node[anchor=east]{};
\filldraw[black] (2,3.4) circle (2pt) node[anchor=east]{};
\filldraw[black] (-2,3.4) circle (2pt) node[anchor=south]{$Q$};
\filldraw[black] (2,-3.4) circle (2pt) node[anchor=north]{$P$};
\filldraw[black] (-2,-3.4) circle (2pt) node[anchor=east]{};
\filldraw[black] (3,1.7) circle (2pt) node[anchor=west]{};
\filldraw[black] (3,-1.7) circle (0pt) node[anchor=west]{$l$};
\filldraw[black] (0,-3.4) circle (0pt) node[anchor=north]{$m$};

\draw (4,0) -- (2,3.4) -- (-2,3.4) -- (-4,0) -- (-2,-3.4) -- (2,-3.4) -- (4,0) ;
\draw[blue,dashed] (2,3.4) --  (-4,0) -- (2,-3.4) -- (2,3.4);
\draw[red, line cap=round, line width=2.5pt, dash pattern=on 0pt off 3\pgflinewidth] (2,-3.4) --  (3,1.7) -- (-2,3.4);

\end{tikzpicture}
   \caption{Some lines of $\MA$ spanned by points of an absolute hexagon.}
\label{fig:MovingHexagon}
\end{figure}
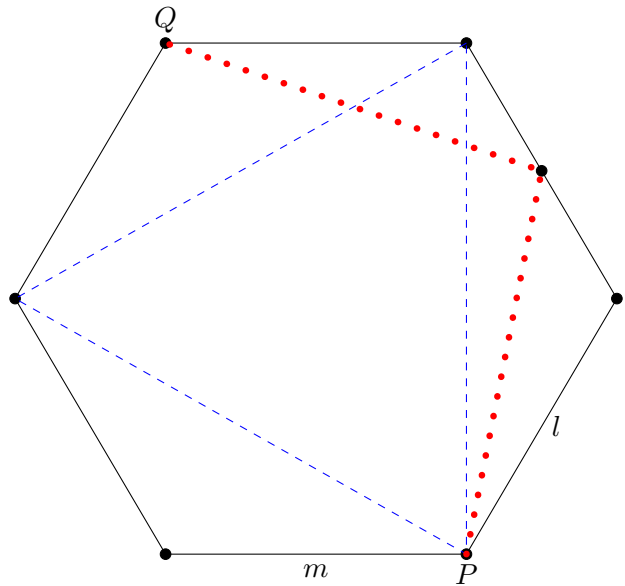

We now investigate how many absolute points a line of $\MA$ contains.
\begin{lemma}\label{lem:movingLine}
    If $l$ is a moving line containing at least two absolute points, it contains exactly $k+1$ absolute points.
\end{lemma}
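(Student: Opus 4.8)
The plan is to show that a moving line $l$ of $\MA$ meets the set of absolute points in exactly the set of $\mathbb{K}$-rational points of a line of $\mathbf{Q}$, which has cardinality $k+1$. The key observation is that the absolute points lie on the parabolic quadric $\mathbf{Q}'$ obtained (in the notation of Section~\ref{subsec:conic}) by substituting $X_7 = -X_3$, so the absolute geometry $\Gamma_\alpha$ is defined over the fixed field $\mathbb{K}$ of $\sigma$: concretely, the absolute points are exactly the points of $\mathbf{Q}$ whose coordinates can be normalized to lie in $\mathbb{K}$, after accounting for the $\sigma$-semilinearity encoded in the triality of type $(I_\sigma)$. Thus the heart of the argument is to pin down how the $\sigma$-action restricts to the points of $l$.

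First I would fix the moving line $l$ and let $P, P'$ be two absolute points on it, so $l = (PP')$ is a genuine line of $\mathbf{Q}$ (it is a line of $\MA$ by definition, hence lies on $\mathbf{Q}$ and is not an absolute line). Every point of $\mathbf{Q}$ on $l$ can be written as $X = \lambda P + \mu P'$ for $[\lambda:\mu] \in \mathbf{PG}(1,\mathbb{F})$, giving $f+1$ points in total. I would then use the defining condition from the definition of $\MA_\alpha$ — that a point $X$ is absolute if and only if $X, \alpha(X), \alpha^2(X)$ form a flag — and translate it, via the trilinear form $\mathcal{T}$ of~(\ref{trilinear}), into a condition on $[\lambda:\mu]$. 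Because $\alpha$ is of type $(I_\sigma)$, the action of $\alpha$ on coordinates is $\sigma$-semilinear, so the incidence condition $\mathcal{T}(X, \alpha(X), Z) \equiv 0$ (and its permutations) becomes, after expanding $X = \lambda P + \mu P'$, a relation that is $\sigma$-semilinear in $[\lambda:\mu]$ and already satisfied at the two points $[1:0]$ and $[0:1]$.

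The central step is to show that the resulting condition on $[\lambda:\mu]$ is precisely that $\mu/\lambda$ lie in the fixed field $\mathbb{K}$, i.e.\ $(\mu/\lambda)^\sigma = \mu/\lambda$. Granting that $P$ and $P'$ are absolute, bilinearity in the two non-$\sigma$ slots of $\mathcal{T}$ together with $\sigma$-semilinearity in the $\alpha$-slot forces the absoluteness of $X = \lambda P + \mu P'$ to reduce to the single scalar equation $\lambda^\sigma \mu = \lambda \mu^\sigma$ (all the cross-terms involving $\mathcal{T}$ evaluated at mixtures of $P,P'$ vanish because $P,P',$ and the line $l$ itself already satisfy the incidence constraints). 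This equation says exactly that $[\lambda:\mu]$ is a $\mathbb{K}$-rational point of the projective line, and $|\mathbf{PG}(1,\mathbb{K})| = k+1$.

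The main obstacle I anticipate is the bookkeeping in the second step: verifying that all the mixed terms coming from $\mathcal{T}(\lambda P + \mu P', \lambda^\sigma \alpha(P) + \mu^\sigma \alpha(P'), Z)$ genuinely collapse to the clean scalar condition $\lambda^\sigma\mu = \lambda\mu^\sigma$. This requires carefully exploiting that $P, P'$ are absolute (so that $P \ast \alpha(P)$, $P' \ast \alpha(P')$, and the relevant incidences of $l$ hold) to kill the four "diagonal" and "off-diagonal" contributions, leaving only the genuinely $\sigma$-twisted cross-term. One must also confirm that no point of $l$ other than these $k+1$ can be absolute, which follows once the condition is shown to be both necessary and sufficient. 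Since Lemma~\ref{lem:oppositeVertices} already guarantees that such moving lines exist and lie on $\mathbf{Q}$, the count $k+1$ then holds uniformly, completing the proof.
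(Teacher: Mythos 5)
Your approach is genuinely different from the paper's (which is synthetic: it places $P_1,P_2$ in an absolute hexagon, takes their common neighbour $P$, observes that $l\subset P\omega$, and invokes Tits' count of the $k+1$ absolute lines through $P$ inside $P\omega$ together with the fact that these lines carry all absolute points of $P\omega$), but as written it has a genuine gap at its central step. After writing $X=\lambda P+\mu P'$ and using the absoluteness of $P$ and $P'$ to kill the two diagonal terms, what remains is $\lambda\mu^\sigma\,\mathcal{T}(P,\alpha(P'),Z)+\mu\lambda^\sigma\,\mathcal{T}(P',\alpha(P),Z)$, and the clean scalar equation $\lambda^\sigma\mu=\lambda\mu^\sigma$ only emerges if the two linear forms $\mathcal{T}(P,\alpha(P'),\cdot)$ and $\mathcal{T}(P',\alpha(P),\cdot)$ are proportional with ratio $-1$ (after a suitable rescaling of $P'$; rescaling $P'\mapsto sP'$ multiplies the ratio by $s^{\sigma-1}$, so one must also check the ratio lies in the image of $t\mapsto t^{\sigma-1}$). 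You assert this collapse but do not prove it, and it is exactly the content of the lemma: if the two forms were linearly independent, the condition would force $\lambda\mu=0$ and $l$ would contain only the two absolute points $P,P'$. Establishing the proportionality is essentially equivalent to exhibiting a third absolute point on $l$, which your argument never does; the paper gets all $k+1$ of them at once from the $k+1$ absolute lines through the common neighbour $P$ in the plane $P\omega$, each of which meets $l$.

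Two further cautions. First, your framing that ``the absolute points are exactly the points of $\mathbf{Q}$ whose coordinates can be normalized to lie in $\mathbb{K}$'' is false when $\sigma\neq\Id$: the hyperplane description $X_3+X_7=0$ and the Grassmann equations (\ref{Grassmann}) are stated in Section \ref{subsec:conic} only for type $(I_{\Id})$, and for $f=k^3$ the twisted hexagon has $(k^8+k^4+1)(k^3+1)$ absolute points, far more than the $\mathbb{K}$-rational points of $\mathbf{Q}$. Only the restricted statement --- that the absolute points on a single moving line form a subline over $\mathbb{K}$ --- is correct, and that is what needs proof. Second, carrying out your computation requires an explicit $\sigma$-semilinear coordinate description of a triality of type $(I_\sigma)$, which the paper does not set up; this is doable (it is in Tits' paper) but is a substantial piece of the ``bookkeeping'' you defer. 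If you want a computation-free repair, note as the paper does that $P_1$ and $P_2$ are neither adjacent (else $l$ is absolute) nor opposite (Lemma \ref{lem:oppositeVertices}) in a common absolute hexagon, and work inside the special plane of their common neighbour.
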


\begin{proof}
Let $P$ be an absolute point and let $P \omega$ be its associated plane. By~\cite[\$8.2.3]{tits1959trialite}, the number of absolute lines going through $P$ and inside of $P \omega$ is equal to $k+1$. There are no other absolute points in $P \omega$ than the ones contained in those $k+1$ lines. This can be shown by a counting argument using~\cite[\$8.2.4 and \$8.2.6]{tits1959trialite}.

Now, let $l$ be a moving line containing at least two absolute points $P_1$ and $P_2$. Then, there exists an absolute hexagon $\mathbf{H}$ such that $P_1$ and $P_2$ are vertices of $\mathbf{H}$. We claim that $P_1$ and $P_2$ are neither adjacent nor opposite in $\mathbf{H}$. Indeed, they cannot be adjacent, else $l$ would be an absolute line, and Lemma \ref{lem:oppositeVertices} tells us that they cannot be opposite. 

Let thus $P$ be the unique vertex of $\mathbf{H}$ at distance one from both $P_1$ and $P_2$. Then $l$ is inside the plane $P \omega$ and $l$ does not contain $P$. Hence the absolute points of $l$ are in one to one correspondence with the absolute lines going through $P$.
\end{proof}

As an immediate corollary, we have that all lines in $M\Gamma$ contain exactly $k+1$ absolute points. In fact $\MA$ is flag-transitive.

\begin{lemma}\label{lem:flagTrans}
    The moving absolute geometry $M\Gamma$ is flag-transitive.
\end{lemma}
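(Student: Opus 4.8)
The plan is to prove that the group $G$ of collineations of $\mathbf{P}$ preserving $\alpha$ — which is $G_2(k)$ when $\sigma=\Id$ and $^3D_4(k)$ otherwise — already acts transitively on the flags of $\MA$, since $G$ clearly sends absolute points to absolute points, moving lines to moving lines, and preserves incidence. The only input I will take about the classical situation is that, by Tits' theorem, $\Gamma_\alpha$ is a generalized hexagon and $G$ acts distance-transitively on its incidence graph; in particular $G$ is transitive on ordered pairs $(A,P)$ of distinct collinear absolute points, equivalently on paths $A-a-P$ of length two, where $a=(AP)$ denotes the absolute line joining $A$ and $P$.

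The geometric core of the argument is that every moving line has a canonical centre. As in the proof of Lemma~\ref{lem:movingLine}, the $k+1$ absolute points of a moving line $l$ lie in a plane $P\omega$ and correspond bijectively to the $k+1$ absolute lines through an absolute point $P$ with $P\notin l$ and $l\subset P\omega$. Because two absolute points at collinearity-distance two in a generalized hexagon have a unique common neighbour, this $P=P(l)$ is uniquely determined by $l$, and the assignment $l\mapsto P(l)$ is $G$-equivariant. Moreover, every moving line through a fixed absolute point $A$ has its centre among the points collinear with $A$ (the neighbour recorded by the line $a=(AP)$). Conversely, I would check that the moving lines through $A$ with a prescribed centre $P$ are exactly the lines of the projective plane $P\omega$ passing through $A$ but not through $P$: such a line meets each of the $k+1$ absolute lines through $P$ in a point that is again absolute (every point of an absolute line is an absolute point, as the hexagon is fully embedded in $\mathbf{Q}$), it lies on $\mathbf{Q}$ by the all-or-one property together with Lemma~\ref{lem:oppositeVertices}, and it cannot be an absolute line, for otherwise two of its absolute points would be joined both through $P$ and along it, violating uniqueness of geodesics. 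There are exactly $f$ such lines, which is consistent with the valency $(k+1)f^{2}$ once summed over the $(k+1)f$ possible centres.

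With this structure the proof reduces to two transitivity statements. Given flags $(A_1,l_1)$ and $(A_2,l_2)$ with centres $P_1,P_2$, I would first use distance-transitivity of $G$ on $\Gamma_\alpha$ to move the ordered collinear pair $(A_1,P_1)$ onto $(A_2,P_2)$; after this we may assume $A_1=A_2=A$ and $P_1=P_2=P$, so that $l_1,l_2$ are two lines through $A$ in the common plane $P\omega$, both different from $a=(AP)$. It then remains to show that the stabiliser $G_{A,P}$ acts transitively on the $f$ moving lines through $A$ with centre $P$, that is, on the pencil of $f$ lines through $A$ in $P\omega$ distinct from $a$.

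This last step is the main obstacle, and it is where the type $(I_\sigma)$ of the triality enters. The pencil of lines through $A$ in $P\omega$ is a projective line on which $G_{A,P}$ acts while fixing the point corresponding to $a$, and I would establish transitivity on the remaining $f$ points by producing enough unipotent elements of $G_{A,P}$ — the root elations attached to the length-two path $A-a-P$ in the Moufang hexagon $\Gamma_\alpha$ — acting as the additive group of $\mathbb{F}$ on this affine pencil. Locating these root subgroups inside $G$ and verifying that they fix both $A$ and $P$ while permuting the remaining lines transitively is the technical heart of the argument; alternatively one can argue directly from the explicit action of $G$ on $\mathbf{Q}$, using the induced collineation $\alpha_Q$ of type $(I_\sigma)$ from Section~\ref{subsec:conic} and the Grassmann equations~(\ref{Grassmann}) to see that $G_{A,P}$ is as transitive as the count demands.
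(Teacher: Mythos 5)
Your reduction is sound as far as it goes: the centre map $l\mapsto P(l)$ is well defined and $G$-equivariant (uniqueness of the common neighbour follows from the girth of the hexagon), the identification of the moving lines through $A$ with centre $P$ with the $f$ lines of the pencil of $P\omega$ through $A$ other than $(AP)$ is correct, and the count $(k+1)f\cdot f=(k+1)f^2$ confirms it. But the proof is not complete: the step you yourself single out as ``the main obstacle'' --- transitivity of $G_{A,P}$ on that pencil of $f$ lines --- is only sketched, via root elations of the Moufang hexagon whose existence, location inside $G$, and action on the pencil you do not verify. As written, the argument establishes transitivity of $G$ on incident point--centre pairs and then defers the decisive step, so there is a genuine gap.

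The gap is, however, exactly the content of the result the paper invokes: \cite[Theorem 6.2.5]{tits1959trialite} states that $G$ acts transitively on ordered triples $(Q,P,Q')$ of absolute points with $(PQ)$ and $(PQ')$ absolute lines, i.e.\ on paths of length two in the collinearity graph of $\Gamma_\alpha$, not merely on ordered collinear pairs as you assume. This single statement subsumes both halves of your decomposition: writing each flag $(A_i,l_i)$ of $\MA$ as $l_i=(A_iQ_i)$ with $Q_i$ a second absolute point of $l_i$ and $Q_i'$ their common neighbour (your centre), a collineation carrying $(A_1,Q_1',Q_1)$ to $(A_2,Q_2',Q_2)$ carries $l_1$ to $l_2$, which is the paper's entire proof. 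Equivalently, in your setup the stabiliser transitivity of $G_{A,P}$ on the pencil is immediate from Tits' theorem applied with $Q=A$ fixed and $Q'$ ranging over the absolute points $\neq P$ of the absolute lines through $P$ other than $(AP)$. Your alternative plan via root groups would presumably succeed, since these hexagons are Moufang, but it trades a one-line citation for a substantial verification that you have not carried out.
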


\begin{proof}
    Take two flags $(P_1,l_1)$ and $(P_2,l_2)$ of $\MA$.
    As we showed in Lemma \ref{lem:movingLine}, a moving line can always be placed in an absolute hexagon $\mathbf{H}$ as a line between two vertices at distance $2$. 
    By~\cite[Theorem 6.2.5.]{tits1959trialite}, we know that the group $G$ of collineations of $\mathbf{P}$ preserving $\mathbf{Q}$ and $\alpha$ acts transitively on triples of absolute points $(Q,P,Q')$ such that $(PQ)$ and $(PQ')$ are absolute lines.
    So there exist pairs of absolute points $(Q_1,Q_1')$ and $(Q_2,Q_2')$ such that $(P_iQ_i) = l_i$ (with $i=1,2$) and a collineation mapping $(P_1,Q_1',Q_1)$ to $(P_2,Q_2',Q_2)$ and hence also $l_1$ to $l_2$. 
    Therefore the group $G$ acts transitively on the flags of $\MA$.
\end{proof}

We now compute the number of lines of $\MA$ passing through a given absolute point $P$.

\begin{lemma}\label{lem:movingLinesPerPoint}
    Let $P$ be an absolute point. There are $(k+1)f^2$ moving lines passing through $P$.
\end{lemma}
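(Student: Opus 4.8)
The plan is to fix an absolute point $A$ and count the moving lines through it by sorting them according to the special plane $C\omega$ in which each one lives, then counting the qualifying lines inside each such plane. Before starting I would record two structural facts. First, \emph{every point of an absolute line is an absolute point}: if $l$ is an absolute line and $R\in l$ is a $0$-point, then applying $\alpha$ and $\alpha^2$ to the incidence $R*l$ and using $l^\alpha=l^{\alpha^2}=l$ shows that $R^\alpha$ (a $1$-point) and $R^{\alpha^2}$ (a $2$-point) are both incident to $l$; since in the $D_4$-diagram the three outer nodes are mutually non-adjacent, the residue of a line consists of three pairwise generalized-digon types, so $R,R^\alpha,R^{\alpha^2}$ are pairwise incident and $R$ is absolute. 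Hence each absolute line carries exactly $f+1$ absolute points. Second, by Lemma~\ref{lem:movingLine} a moving line carries exactly $k+1$ absolute points, and (reading that proof) they all lie in one special plane $C\omega$, where $C$ is the vertex of the ambient absolute hexagon collinear with any two absolute points of the line.

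Next I would set up the counting map. To a moving line $\ell$ through $A$, with a second absolute point $B$, associate the point $C=C(\ell)$ collinear in $\Gamma_\alpha$ with both $A$ and $B$. Since $A$ and $B$ lie at distance $4$ in $\Gamma_\alpha$ (they are joined by the non-absolute line $\ell$, and by Lemma~\ref{lem:oppositeVertices} they are not opposite), the generalized-hexagon geometry makes $C$ the \emph{unique} such point, so $C(\ell)$ is well defined and independent of $B$; moreover $C$ is an absolute point at distance $2$ from $A$. Conversely, fix an absolute point $C$ at distance $2$ from $A$. Then $A\in C\omega$, and the lines through $A$ in the projective plane $C\omega\cong\mathbf{P}\mathbf{G}(2,\mathbb{F})$ that avoid $C$ are exactly $f$ in number. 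Each such line $\ell$ lies in $C\omega\subseteq\mathbf{Q}$, hence is a line of $\mathbf{Q}$; it meets each of the $k+1$ absolute lines through $C$ in a distinct point, all absolute by the first fact, so $\ell$ contains $k+1$ absolute points. Finally $\ell$ is moving: were it absolute, then $\ell$ together with $(AC)$ and an absolute line $(CB_1)$ would form a triangle $A,C,B_1$ of absolute points, i.e. a $6$-circuit in the incidence graph of $\Gamma_\alpha$, contradicting girth $12$. Thus the fibre of $\ell\mapsto C(\ell)$ over each admissible $C$ has exactly $f$ elements.

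It then remains to count the admissible points $C$, that is, the absolute points at distance $2$ from $A$. There are $k+1$ absolute lines through $A$ (as used in the proof of Lemma~\ref{lem:movingLine}), each carrying $f$ absolute points besides $A$ by the first fact, and two such lines meet only in $A$; hence there are $(k+1)f$ such points $C$. Multiplying by the fibre size yields $(k+1)f\cdot f=(k+1)f^2$ moving lines through $A$.

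The main obstacle is the bookkeeping needed to make this an honest partition rather than an overcount: one must know that every moving line sits in a \emph{unique} special plane $C\omega$ (uniqueness of the middle point in $\Gamma_\alpha$) and that, conversely, the $f$ lines produced inside each $C\omega$ are genuinely moving and pairwise distinct across different $C$. Both points reduce to the girth-$12$ property of $\Gamma_\alpha$ together with the fact that two distinct planes of $\mathbf{P}\mathbf{G}(7,\mathbb{F})$ meet in at most a line. As a consistency check, the same total arises from counting the absolute points at distance $4$ from $A$: a generalized hexagon of order $(s,t)=(f,k)$ has $s^2t(t+1)=(k+1)k f^2$ of them, and since each moving line through $A$ accounts for exactly $k$ such points, one again obtains $(k+1)f^2$.
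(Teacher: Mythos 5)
Your argument is correct and is essentially the paper's: both proofs count via the $(k+1)f$ absolute points $C$ collinear with $A$ and the special planes $C\omega$ they determine, yours by exhibiting the fibre over each $C$ as the $f$ non-absolute lines through $A$ in the projective plane $C\omega$, the paper's by counting the $k(k+1)f^2$ absolute points at distance two from $A$ and dividing by the $k$ such points carried by each moving line. Indeed, your closing ``consistency check'' is precisely the computation the paper gives as its proof.
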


\begin{proof}
    Let $x$ be the number of moving lines through $P$ and let $y$ be the number of absolute vertices at distance $2$ of $P$ in the absolute geometry $\Gamma$. We first claim that $kx = y$. Indeed, if $Q$ is an absolute point at distance $2$ from $P$, then there exists an absolute point $R$ such that $(PR)$ and $(RQ)$ are absolute lines. Hence all three points are in $ R \omega$ and the line $(PQ)$ is a line of $\MA$. But any other point $Q'$ on $(PQ)$ would yield the same line since $(PQ') = (PQ)$. By Lemma~\ref{lem:movingLine} there are $k+1$ points on any line of $\MA$. Hence this proves the claim.

    It now suffices to find out what $y$ is. There are $k+1$ absolute lines containing $P$. Each of these lines contains $f+1$ absolute points. So we have $(k+1)f$ neighbours of $P$ in $\Gamma$. Through each of these neighbours pass $k$ new absolute lines which all contain $f$ new absolute points. We thus counted $k(k+1)f^2$ potential absolute points at distance $2$ of $P$ in $\Gamma$. Since the girth of $\Gamma$ is $6$, we could not possibly have counted points twice, so that $y = k(k+1)f^2$ and $x = (k+1)f^2$.
\end{proof}

Using flag-transitivity together with the previous results, we can now count the number of lines of $\MA$.

\begin{lemma}\cite[\$8.2.4]{tits1959trialite}\label{lem:absolutePoints}
There are $(k^2f^2 +kf +1)(f+1)$ points in $\MA$.
\end{lemma}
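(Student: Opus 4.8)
The plan is to count the points of $\MA$, which by definition are precisely the absolute points, i.e.\ the points of the classical absolute $\Gamma_\alpha$. The key is that $\Gamma_\alpha$ is a generalized hexagon whose two parameters have already been determined: each absolute point lies on $k+1$ absolute lines, and, as recorded inside the proof of Lemma~\ref{lem:movingLinesPerPoint}, each absolute line carries $f+1$ absolute points. Thus $\Gamma_\alpha$ is a generalized hexagon of order $(s,t)=(f,k)$. First I would fix an absolute point $P$ and stratify all absolute points by their distance from $P$ in the collinearity graph of $\Gamma_\alpha$, equivalently by their incidence-graph distance $0,2,4,6$ from $P$.

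The first three strata are already in hand. At distance $0$ there is only $P$; at distance $1$ there are $(k+1)f$ absolute points, namely the points other than $P$ lying on the $k+1$ absolute lines through $P$; and at distance $2$ there are $k(k+1)f^2$ of them. The latter two numbers are exactly the quantities computed in the proof of Lemma~\ref{lem:movingLinesPerPoint}, where the gonality condition $g=6$ (no circuit of length less than $12$ in the incidence graph) was used to ensure nothing is counted twice. That same gonality bound shows the three strata listed so far are pairwise disjoint.

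The crux is the final stratum, the points at collinearity distance $3$ from $P$, i.e.\ the points opposite to $P$ in the hexagon, and I expect this to be the main obstacle. Below the diameter the neighbour-expansion is tree-like, but here it breaks down: naively expanding one step further from the distance-$2$ points gives $k(k+1)f^2\cdot kf=k^2(k+1)f^3$ incidences, yet at the diameter circuits of length exactly $12$ — the girth itself — appear, so an opposite point is joined to $P$ by more than one geodesic and is overcounted. I would resolve this by a short multiplicity computation: each opposite point $T$ lies on $k+1$ lines, and since $T$ is opposite $P$ every such line meets the set of distance-$2$ points in a single point, while conversely every distance-$2$ neighbour of $T$ arises this way; hence $T$ is collinear with exactly $k+1$ distance-$2$ points. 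Dividing the naive count by this multiplicity $k+1$ yields $k^2f^3$ points in the last stratum. (One could instead bypass the multiplicity altogether by invoking the classical enumeration $(1+s)(1+st+s^2t^2)$ of the points of a generalized hexagon of order $(s,t)$.)

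Finally, since the collinearity graph of a generalized hexagon has diameter $3$, these four strata exhaust every absolute point, and summing them gives
\begin{equation*}
1 + (k+1)f + k(k+1)f^2 + k^2 f^3 = (f+1)\bigl(k^2 f^2 + kf + 1\bigr),
\end{equation*}
which is the asserted number of points of $\MA$, completing the count.
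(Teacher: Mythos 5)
Your count is correct, but be aware that the paper does not prove this lemma at all: the statement is imported directly from Tits \cite{tits1959trialite}, \S 8.2.4, as the citation attached to the lemma indicates, so any self-contained argument is by definition a different route. What you have written is the standard point count for a finite generalized hexagon of order $(s,t)=(f,k)$, namely $(1+s)(1+st+s^2t^2)$, specialised to the classical absolute $\Gamma_\alpha$; the two parameters you need are indeed available in the paper (each absolute point lies on $k+1$ absolute lines by \cite[\S 8.2.3]{tits1959trialite}, and each absolute line is a full projective line all of whose $f+1$ points are absolute, a fact the paper uses in the proof of Theorem~\ref{main}). The only delicate step is the opposite stratum, and you handle it correctly: a line through a point $T$ opposite $P$ is at incidence-graph distance $5$ from $P$, hence carries a unique point at collinearity distance $2$ from $P$ (two such points would force a circuit of length less than $12$), so each opposite point is hit with multiplicity exactly $k+1$ and the stratum has $k^2f^3$ points, giving the total
\begin{equation*}
1+(k+1)f+k(k+1)f^2+k^2f^3=(f+1)\bigl(k^2f^2+kf+1\bigr).
\end{equation*}
The trade-off is clear: the paper's citation keeps the lemma short and defers to Tits's original enumeration, while your argument makes the count self-contained given only the order $(f,k)$ and the girth and diameter properties of $\Gamma_\alpha$ that the paper has already recorded.
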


\begin{lemma} \label{lem:movingLines}
    There are $(k^2f^2 +kf +1)(f+1)f^2$ lines in $\MA$.
\end{lemma}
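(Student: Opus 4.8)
The plan is to count the lines of $\MA$ by a standard double-counting (flag-counting) argument, leveraging the flag-transitivity established in Lemma~\ref{lem:flagTrans} together with the two local counts already available. First I would count the number of incident point-line flags of $\MA$ in two ways. On one hand, Lemma~\ref{lem:movingLinesPerPoint} tells us that each absolute point lies on exactly $(k+1)f^2$ moving lines, and Lemma~\ref{lem:absolutePoints} tells us there are $(k^2f^2+kf+1)(f+1)$ absolute points, so the total number of flags is
\begin{equation*}
N_{\mathrm{flags}} = (k^2f^2+kf+1)(f+1)\cdot(k+1)f^2.
\end{equation*}
On the other hand, by Lemma~\ref{lem:movingLine} (and its corollary) every moving line contains exactly $k+1$ absolute points, so if $N_L$ denotes the number of lines of $\MA$ we also have $N_{\mathrm{flags}} = N_L\cdot(k+1)$.

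Equating the two expressions and solving for $N_L$, the factor $(k+1)$ cancels cleanly, giving
\begin{equation*}
N_L = \frac{(k^2f^2+kf+1)(f+1)(k+1)f^2}{k+1} = (k^2f^2+kf+1)(f+1)f^2,
\end{equation*}
which is exactly the claimed count. Strictly speaking, flag-transitivity (Lemma~\ref{lem:flagTrans}) is what guarantees that the local numbers $k+1$ (points per line) and $(k+1)f^2$ (lines per point) are genuinely constant across all lines and all points respectively, so that the double count is valid; I would invoke it to justify that uniformity, although Lemmas~\ref{lem:movingLine} and~\ref{lem:movingLinesPerPoint} are in fact stated for an arbitrary line and an arbitrary point and so already supply the needed constancy directly.

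There is essentially no genuine obstacle here: the statement is an immediate arithmetic consequence of the preceding lemmas, and the only care needed is to make sure the two local counts being combined refer to the same geometry $\MA$ and that the cancellation of $(k+1)$ is legitimate (it is, since $k\ge 2$ so $k+1\neq 0$ as an integer). The one point worth a sentence of justification is that no moving line is double-counted: since each line is determined by its point set and contributes $k+1$ distinct flags, the map from flags to lines is exactly $(k+1)$-to-one, which is precisely what the double count requires.
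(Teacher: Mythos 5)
Your proof is correct and follows essentially the same route as the paper's: both count point-line flags of $\MA$ in two ways using Lemmas~\ref{lem:absolutePoints}, \ref{lem:movingLinesPerPoint} and \ref{lem:movingLine}, then divide by the $k+1$ points per line. Your added remarks on the validity of the cancellation and the $(k+1)$-to-one flag map are fine but not needed beyond what the paper states.
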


\begin{proof}
    By Lemma~\ref{lem:absolutePoints}, there are $(k^2f^2 +kf +1)(f+1)$ absolute points. The number of lines of $\MA$ can be computed by multiplying the number of points by the number of lines per point (that is $(k+1)f^2$ by Lemma~\ref{lem:movingLinesPerPoint}) and dividing by the number of points per line (that is $(k+1)$ by Lemma~\ref{lem:movingLine}). 
\end{proof}
\begin{lemma}
    Every line of $\MA$ is contained in exactly one special plane and every special plane contains $f^2$ lines of $\MA$.
\end{lemma}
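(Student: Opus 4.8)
The plan is to analyse the internal structure of a single special plane $P\omega$ and then feed this back into the generalized hexagon $\Gamma_\alpha$. First I would record the basic geometry of $P\omega$. Since $P\omega = P^\alpha \cap P^{\alpha^2}$ is the intersection of two maximal ($3$-dimensional) subspaces of $\mathbf{Q}$, it is a projective plane entirely contained in $\mathbf{Q}$; in particular every one of its $f^2+f+1$ lines is a line of $\mathbf{Q}$. As recalled in the proof of Lemma~\ref{lem:movingLine}, there are exactly $k+1$ absolute lines through $P$ inside $P\omega$, say $\ell_0,\dots,\ell_k$, and every absolute point of $P\omega$ lies on one of them. The key structural claim, which I would isolate first, is that the $\ell_i$ are the \emph{only} absolute lines contained in $P\omega$: an absolute line $m\subseteq P\omega$ not through $P$ would meet two of the $\ell_i$ in distinct absolute points $R_i,R_j$, and then $P,R_i,R_j$ together with the lines $\ell_i,m,\ell_j$ would form a triangle in $\Gamma_\alpha$, which is impossible since the incidence graph of a generalized hexagon has girth $12$. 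This is precisely the step that treats the $G_2$ case $f=k$ (where \emph{every} point of $P\omega$ is absolute) on the same footing as the case $f=k^3$.

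Granting this, the second assertion is a line-by-line count in the projective plane $P\omega$. Of the $f+1$ lines through $P$, the $k+1$ lines $\ell_i$ are absolute and the remaining $f-k$ meet $\bigcup_i\ell_i$ only in $P$, so none of them is a line of $\MA$ (the first kind is absolute, the second carries the single absolute point $P$). A line $l$ not through $P$ meets each $\ell_i$ in a point other than $P$, hence in an absolute point, so it carries $k+1\ge 2$ absolute points; and it is not absolute, because it avoids $P$ while, by the structural claim, every absolute line of $P\omega$ runs through $P$. Therefore the lines of $\MA$ inside $P\omega$ are exactly the $f^2$ lines of $P\omega$ missing $P$.

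For the first assertion, existence follows directly from Lemma~\ref{lem:movingLine}, which places any line $l$ of $\MA$ inside the special plane $P\omega$ attached to the common neighbour $P$ of two of its absolute points. Uniqueness I would argue inside $\Gamma_\alpha$. Two absolute points $R_0,R_1$ on $l$ are not collinear (otherwise $l$ would be an absolute line), so they lie at distance $4$ in the incidence graph and, by the axioms of a generalized hexagon, admit a \emph{unique} common neighbour. If $l$ were contained in two special planes $P\omega$ and $P'\omega$, then, by the description in the previous paragraph, each $R_i$ would lie on an absolute line through $P$ and on one through $P'$, making both $P$ and $P'$ common neighbours of $R_0$ and $R_1$; uniqueness forces $P=P'$. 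Since the assignment $P\mapsto P\omega$ is injective ($P$ is recovered from $P\omega$ as its unique point incident with more than one absolute line), the special plane containing $l$ is unique.

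I expect the structural claim of the first paragraph, that no absolute line of $P\omega$ avoids $P$, to be the main obstacle: it is the only place where the girth of the generalized hexagon is genuinely used, and it is what pins the count at $f^2$ rather than something larger. As a final sanity check I would note that the argument exhibits a bijection between absolute points and special planes, so Lemma~\ref{lem:absolutePoints} gives $(k^2f^2+kf+1)(f+1)$ special planes, each carrying $f^2$ lines of $\MA$, each such line lying in exactly one of them; this reproduces the total $(k^2f^2+kf+1)(f+1)f^2$ of Lemma~\ref{lem:movingLines} and could alternatively be used to deduce uniqueness by a double count.
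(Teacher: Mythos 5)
Your proof is correct, and its first two paragraphs follow essentially the same route as the paper: identify the $k+1$ absolute lines through $P$ inside $P\omega$, note that every absolute point of $P\omega$ lies on one of them, and deduce that the $f^2$ lines of $P\omega$ avoiding $P$ each carry $k+1$ absolute points and cannot be absolute without creating a triangle in the generalized hexagon (you are in fact more careful than the paper in checking that the $f-k$ non-absolute lines through $P$ carry only the single absolute point $P$, which is needed to pin the count at exactly $f^2$). Where you genuinely diverge is in the uniqueness of the special plane: the paper gets it globally, by comparing the incidence count ``$f^2$ lines per absolute point'' against the total of Lemma~\ref{lem:movingLines} and concluding that no line can be counted twice, whereas you argue locally inside $\Gamma_\alpha$, using that two non-collinear absolute points of a moving line are at distance $4$ and hence have a unique common neighbour, which must simultaneously be $P$ and $P'$. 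Your argument is more synthetic and self-contained (it does not need the prior enumeration of all lines of $\MA$, only Lemma~\ref{lem:movingLine}), and it isolates exactly where the gonality of the generalized hexagon is used; the paper's double count is shorter and, as you note at the end, the two are interchangeable once the totals of Lemmas~\ref{lem:absolutePoints} and~\ref{lem:movingLines} are available. One small point worth making explicit in your version: uniqueness of the special plane as a \emph{plane} also requires that $P\omega=P'\omega$ forces the same plane to be counted once, which you handle via the injectivity of $P\mapsto P\omega$; the paper's double count absorbs this silently.
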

\begin{proof}
    Let $P$ be an absolute point and $P \omega$ be its associated plane. We claim that all lines of $P \omega$, except for those containing $P$, are lines of $\MA$. 
    Indeed, a line $l$ of $P \omega$ that does not contain $p$ must intersect every line of $P \omega$ containing $P$. This means that $l$ contains $k+1$ absolute points and $l$ cannot be an absolute line, else we would have found a triangle of absolute lines in $\Gamma$.

    This gives us a way to associate to every absolute point $f^2$ lines of $\MA$. Clearly, we can count every line of $\MA$ in this way. Moreover, since we know that, by Lemma~\ref{lem:movingLines}, there are $f^2$ times more lines than points in $\MA$, we can deduce that we never count a line twice in this way, which proves the lemma.
\end{proof}

At this point, we remark that although it may seem natural to think that the triality $\alpha$ acts on the set of lines of $\MA$, that is actually not the case. Indeed, if $l$ is a line of $\MA$, then $\alpha(l)$ is still a line but it never contains more than one absolute point. To see this, let $P \omega$ be the unique special plane containing $l$ and suppose that $\alpha(l)$ also contains two absolute points. Then $\alpha(l)$ is also contained in a unique special plane $ R \omega$ for some absolute point $R$. The line $\alpha^2(l)$ must then be $(PQ)$. But $Q$ is a point of $l$, since $l = \alpha^2(\alpha(l))$. Therefore $(PQ)$ is a line of $P \omega$ containing $P$ and $Q$, and must then be an absolute line, a contradiction.

We now compute an upper bound for the number of lines at distance $4$ or less from a given line $l$ of $\MA$. This will be used later to show the existence of two lines $l$ and $l'$ of $\MA$ such that the distance between 
$l$ and $l'$ is six in the incidence graph of $\MA$.

\begin{lemma} \label{lem:counting}
    Let $l$ be a moving line and let $C = (k+1)f^2$ be the number of moving lines through a point. There are at most $1 + (k+1) (C-1) + k (k+1) (C-1)^2$ moving lines at distance $4$ or less from $l$ in the incidence graph of $\MA$. 
\end{lemma}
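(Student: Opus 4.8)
The plan is to run a standard sphere-counting argument in the incidence graph of $\MA$, which is bipartite with the two sides being the absolute points and the moving lines. Because the graph is bipartite, any two lines lie at even distance from one another, so ``distance $4$ or less from $l$'' means distance $0$, $2$, or $4$, and it suffices to bound the number of lines in each of these three shells separately and add the bounds. The two ingredients feeding every step are Lemma~\ref{lem:movingLine}, which says that each line of $\MA$ carries exactly $k+1$ absolute points, and Lemma~\ref{lem:movingLinesPerPoint}, which says that each absolute point lies on exactly $C=(k+1)f^2$ moving lines.

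For the shell at distance $0$ there is only $l$ itself, giving the summand $1$. For distance $2$, a line at distance $2$ from $l$ must share an absolute point with $l$; since $l$ has $k+1$ points and each of them lies on $C$ moving lines, one of which is $l$, there are at most $(k+1)(C-1)$ lines at distance $2$. This is only an upper bound because the same line might be reached through two different points of $l$, but an overcount is all we need here.

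The main content is the shell at distance $4$. Here I would fix a line $m$ at distance $2$ from $l$ and count the lines reached from $m$ through a point not already on $l$. The key geometric observation is that two distinct lines of $\mathbf{Q}$, being distinct projective lines, meet in at most one point; hence $m$ meets $l$ in exactly one absolute point, leaving exactly $k$ of its $k+1$ points off $l$. Through each of these $k$ points run $C$ moving lines, one of which is $m$, contributing at most $C-1$ lines per point and so at most $k(C-1)$ lines reachable from a single $m$. Summing over the at most $(k+1)(C-1)$ lines $m$ at distance $2$ yields at most $k(k+1)(C-1)^2$ lines at distance $4$. Adding the three shells gives the claimed bound $1+(k+1)(C-1)+k(k+1)(C-1)^2$.

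The only subtlety, and the reason every count is phrased as an upper bound, is the systematic overcounting: a given line may be joined to $l$ by several length-$2$ or length-$4$ paths, and a distance-$2$ line reappears inside the distance-$4$ tally. Since the statement only asserts an upper bound on the number of lines in the ball of radius $4$, these overcounts are harmless. The one place where care is genuinely required is the replacement of $k+1$ by $k$ in the distance-$4$ step, which rests precisely on the fact that $m$ and $l$ share exactly one point rather than possibly more; this is the step I expect to be the main (if modest) obstacle to state cleanly.
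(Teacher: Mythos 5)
Your proof is correct and takes essentially the same sphere-counting route as the paper: the paper also bounds the shells at distances $0$, $2$ and $4$, passing through the intermediate estimate $A(l,3)\leq k(k+1)(C-1)$ for points at distance $3$, which you simply fold into the distance-$4$ step. The point you flag as the main subtlety---that a line $m$ at distance $2$ meets $l$ in exactly one point, leaving $k$ points of $m$ off $l$---is precisely the fact the paper invokes when it says each line at distance two yields $k$ new points.
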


\begin{proof}
    Denote by $A(l,k)$ the number of elements of $\MA$ at distance exactly $k$ from $l$ in the incidence graph of $\MA$. If $k$ is odd, these elements will all be points and if $k$ is even they will be lines.
    Hence, we need to compute the number $A(l,0) + A(l,2) + A(l,4)$. Obviously, we have that $A(l,0) = 1$. It is also clear that $A(l,2) = (k+1)(C-1)$. Indeed, there are exactly $(k+1)$ absolute points on $l$ and there are $C$ lines through each of them, one of them being $l$ every time. After that it becomes more difficult to get precise numbers as there are triangles in $\MA$, but we can ignore the triangles and still get upper bounds. Let us estimate $A(l,3)$. We have $A(l,2)$ lines at distance two from $l$ and each of these lines yields $k$ absolute points at distance $3$ from $l$. This gives us $k(k+1)(C-1)$ potential points. Hence, $A(l,3) \leq k (k+1) (C-1)$. We proceed the same way for computing $A(l,4)$. Through each one of the points of $A(l,3)$ passes $C$ moving lines. That gives us $k (k+1)(C-1)^2$ potential lines in $A(l,4)$, which is what we need to conclude the proof. 
\end{proof}

We need two more Lemmas before proving the main Theorem, that is Theorem~\ref{main}. The first Lemma is used to show that some configuration of points are not far from each other in $\MA$ and the second one is the key ingredient in the proof that $d_l = 6$ in $\MA$.

\begin{lemma} \label{lem:incidentVertices}
    Let $l$ be an absolute line and $P_1$ and $P_2$ be vertices on $l$. There exists an absolute vertex $P$ not on $l$ such that $(PP_1)$ and $(PP_2)$ are moving lines in $\mathbf{Q}$.
\end{lemma}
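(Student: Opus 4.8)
The plan is to avoid arguing through distances in the incidence graph and instead to produce $P$ directly from a \emph{third} absolute point on $l$ together with its special plane. Since each absolute line carries $f+1 \geq 3$ absolute points, I first choose an absolute point $Q$ on $l$ distinct from both $P_1$ and $P_2$. The point $Q$ lies on $k+1 \geq 2$ absolute lines, all contained in its special plane $Q\omega$, so I can pick an absolute line $a \neq l$ through $Q$, and then (as $a$ again carries $f+1 \geq 2$ absolute points) an absolute point $P \neq Q$ on $a$. This $P$ will be the required vertex.

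Before checking the two lines, I would record that $P \notin l$: otherwise $P$ and $Q$ would be two distinct common points of $a$ and $l$, forcing $a = l$ and contradicting $a \neq l$. Now the key geometric observation is that $l = (QP_1) = (QP_2)$ and $a = (QP)$ are all absolute lines through $Q$, hence all contained in the special plane $Q\omega$; consequently $P_1$, $P_2$ and $P$ all lie in $Q\omega$. Since $Q\omega = Q^\alpha \cap Q^{\alpha^2} \subseteq Q^\alpha$ is totally singular (it sits inside the maximal subspace $Q^\alpha \subseteq \mathbf{Q}$), the projective lines $(PP_1)$ and $(PP_2)$ are genuine lines of $\mathbf{Q}$, each visibly containing two absolute points.

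It then remains to show that neither $(PP_1)$ nor $(PP_2)$ is absolute, and this is the only substantive point of the argument. Suppose $(PP_1)$ were absolute. The points $P$, $Q$, $P_1$ are not collinear — if they were, their common line would be $a$ (as $P,Q \in a$), forcing $P_1 \in a$, whence $P_1, Q \in a \cap l$ and again $a = l$ — so the three absolute lines $a = (QP)$, $l = (QP_1)$ and $(PP_1)$ would form a triangle of absolute points in the classical absolute geometry $\Gamma_\alpha$. This is impossible, since $\Gamma_\alpha$ is a generalized hexagon and therefore contains no triangles (its incidence graph has girth $12$, while a triangle is a $6$-circuit). The identical reasoning with $P_2$ in place of $P_1$ rules out $(PP_2)$ being absolute. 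Hence both $(PP_1)$ and $(PP_2)$ are lines of $\mathbf{Q}$ that are moved by $\alpha$ and contain at least two absolute points, i.e. moving lines.

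The main obstacle is precisely this non-absoluteness step: everything else is bookkeeping with the cardinalities $f+1$ and $k+1$, which guarantee that the choices of $Q$, $a$ and $P$ exist. Once the four absolute points $P, Q, P_1, P_2$ have been placed inside the single totally singular plane $Q\omega$, the no-triangle property of the generalized hexagon $\Gamma_\alpha$ finishes the proof.
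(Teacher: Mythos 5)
Your proof is correct and follows essentially the same route as the paper: both arguments take a third absolute point on $l$ (your $Q$, the paper's $P_3$), place $P_1$, $P_2$ and a new absolute point $P$ inside its special plane to conclude that $(PP_1)$ and $(PP_2)$ are lines of $\mathbf{Q}$, and then rule out absoluteness via the no-triangle property of the generalized hexagon $\Gamma_\alpha$. The only cosmetic difference is that you obtain $P$ directly from a second absolute line through $Q$, whereas the paper extracts it from an absolute hexagon through $P_1$ and $P_3$.
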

\begin{proof}
    The line $l$ must contain at least another point $P_3$ which is then an absolute point. Let $\mathbf{H}$ be an hexagon containing $P_1$ and $P_3$. Since $P_1$ and $P_3$ are adjacent in $\mathbf{H}$, there is a unique vertex $P \neq P_1$ or $P_3$ in $\mathbf{H}$. But then $P_1,P_2,P_3$ and $Q$ are all in the plane $P \omega_3$ and the lines $(PP1)$ and $(PP_2)$ cannot be absolute lines, else there would be an absolute triangle. 
\end{proof}

\begin{lemma} \label{lem:dist6}
    There exist two moving lines $l$ and $l'$ that are at distance $6$ or more in the incidence graph of $\MA$.
\end{lemma}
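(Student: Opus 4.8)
The plan is to argue by counting. Fix an arbitrary moving line $l$ and compare the number of moving lines reachable from $l$ within distance $4$ with the total number of moving lines. The incidence graph of $\MA$ is bipartite with parts $P$ and $L$, so any walk joining two lines has even length; consequently a moving line that is not at distance $0$, $2$ or $4$ from $l$ is automatically at distance $6$ or more, and it suffices to exhibit a single moving line lying outside the distance-$4$ ball of $l$.

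By Lemma~\ref{lem:movingLines} there are $N = (k^2f^2+kf+1)(f+1)f^2$ moving lines, while by Lemma~\ref{lem:counting} at most $B = 1 + (k+1)(C-1) + k(k+1)(C-1)^2$ of them lie at distance $4$ or less from $l$, where $C = (k+1)f^2$. Thus the first step is to analyse the inequality $N > B$: substitute $C = (k+1)f^2$, expand both sides, and determine for which parameters it holds, treating separately the one-parameter families $f = k$ (the $G_2$ case) and $f = k^3$ (the $^3D_4$ case). Wherever $N > B$ holds, some moving line $l'$ satisfies $d_{\MA}(l,l') \geq 6$, as desired.

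The main obstacle is exactly this inequality, because the estimate $B$ is a crude overcount. As $\MA$ has gonality $g = 3$ it contains many triangles, so the same point and the same line are counted repeatedly in the bounds for $A(l,3)$ and $A(l,4)$. Comparing leading terms, $N$ grows like $k^2f^5$ whereas $B$ grows like $k(k+1)^3f^4$; hence for fixed $k$ and $f \to \infty$ the inequality is comfortable and it holds throughout the $^3D_4$ family except for its smallest member, but for $f = k$ one has $N \sim k^7$ against $B \sim k^8$, so the naive comparison does not close and a finer argument is needed.

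To cover the remaining parameters I would replace $B$ by the true size of the distance-$4$ ball, which is far smaller. A moving line $l'$ lies within distance $4$ of $l$ precisely when some absolute point of $l'$ either coincides with, or lies on a common moving line with, some absolute point of $l$; in the generalized hexagon $\Gamma_\alpha$ this means the two points are at incidence-distance $0$ or $4$. Using the distance distribution of $\Gamma_\alpha$ — in particular that it has no circuit of length less than $12$, so the distance-$2$ balls around the $k+1$ points of $l$ overlap heavily and the points opposite all of them are numerous — one shows that the set of absolute points unreachable in this way still supports an entire moving line $l'$. Every point of such an $l'$ is then at incidence-distance $2$ or $6$ (never $0$ or $4$) from every point of $l$, forcing $d_{\MA}(l,l') \geq 6$. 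The delicate point, and where the girth of $\Gamma_\alpha$ is genuinely used, is to guarantee that the unreachable points actually carry a whole moving line rather than merely existing individually.
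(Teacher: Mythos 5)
Your first half coincides with the paper's argument: the comparison of the bound $B$ of Lemma~\ref{lem:counting} with the total count $N$ of Lemma~\ref{lem:movingLines} settles the case $f=k^3$ for $k\geq 3$ (the paper disposes of the remaining member $f=8$, $k=2$ by a computer check via Lemma~\ref{lem:stab}; you note this case is exceptional but do not say how you would handle it). You also correctly diagnose that the count fails when $f=k$, since $N\sim k^7$ while $B\sim k^8$. Your reformulation of the problem for that case is also sound: since two absolute points are joined by a moving line exactly when they are at collinearity-distance $2$ in the generalized hexagon $\Gamma_\alpha$, a line $l'$ is at distance $\geq 6$ from $l$ precisely when no point of $l'$ is equal or collinear-at-distance-$2$ (in the hexagon) to a point of $l$.

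The gap is that for $f=k$ you never actually produce such a line $l'$; the step you yourself flag as ``delicate'' is the entire content of the lemma in this case, and it is not clear your route can close it. A union bound over the $k+1$ points of $l$ of their collinearity-distance-$\leq 2$ balls gives $(k+1)(1+k(k+1)+k^3(k+1))=k^5+2k^4+2k^3+2k^2+2k+1$, which already exceeds the total number $(k+1)(k^4+k^2+1)$ of absolute points, so ``the unreachable points are numerous'' is false without a precise inclusion--exclusion quantifying how the balls overlap (they all contain the point $P$ with $l\subseteq P\omega$ and much of its neighbourhood, but you would have to compute this exactly); and even after establishing that unreachable points exist, you would still need them to carry an entire moving line, which does not follow from cardinality alone. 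The paper sidesteps all of this with an explicit witness: in the $(I_{\Id})$ case the absolute points lie in the hyperplane $X_3+X_7=0$, so one works in the parabolic quadric in $\mathbf{P}\mathbf{G}(6,\mathbb{F})$, takes the absolute hexagon $e_0-e_5-e_2-e_4-e_1-e_6-e_0$ on coordinate points, sets $l=(e_5e_6)$ and $l'=(e_1e_2)$, and checks via the Grassmann equations~(\ref{Grassmann}) that every line of $\mathbf{Q}$ joining a point of $l$ to a point of $l'$ would have to be an absolute line, so no moving line connects them. To complete your proof you should either carry out the exact overlap computation and the existence of a full line in the complement, or substitute an explicit construction of this kind.
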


\begin{proof}
    In the case of $f = k^3$, this follows from Lemma \ref{lem:counting} except for $f = 8$ and $k=2$, in which case we can use {\sc Magma}~\cite{magma} or some other program to verify the claim. Indeed, by Lemma \ref{lem:movingLines}, we know that there are a total of $A = (k^2f^2 +kf +1)(f+1)f^2$ lines in $\MA$. We also know, by Lemma \ref{lem:counting}, that if we choose a line $l$ as a base line, there are at most $B =1 + (k+1) (C-1) + k (k+1) (C-1)^2$, where $C = (k+1)f^2$, lines at distance $4$ or less from $l$.  Consider $A$ and $B$ as polynomials in $f$. When $f = k^3$, the leading term for $A$ is of order $5$ while the leading term for $B$ is of order $4$. A straighforward analysis then shows that $A > B$ as soon as $k \geq 3$.  See Lemma \ref{lem:stab} and the comments thereafter for a way to construct $\MA$ on a computer, and thus verify that the statement holds in the small cases too.

    On the other hand, if $\sigma$ is the identity, we need to proceed differently since in that case $B > A$ except maybe when $f =k$ is small enough. We will explicitly show that there exists a pair of lines at distance $6$. In this case, it is known that the absolute points are exactly the intersection of $\mathbf{Q}$ with the hyperplane of equation $X_3 + X_7 = 0$. Therefore, by substituting $X_7$ by $-X_3$, we can work in a projective space of dimension $6$ instead and use Grassmann coordinates to characterize the absolute lines by the set of equations (\ref{Grassmann}) described in the previous section. Let $e_i$ be the points having all homogeneous coordinates equal to $0$ except for the $i^{th}$ one that can be set to $1$. Using the trilinear form $\mathcal{T}$ with equation (\ref{trilinear}), we can check that all the $e_i$, except for $i= 3$ or $7$, are absolute points. Using the equations (\ref{Grassmann}), we also check that the path $e_0 -e_5 -e_2-e_4-e_1-e_6-e_0$ forms an hexagon of absolute lines and points. We claim that the lines $l = (e_5e_6)$ and $l'= (e_1e_2)$ are lines of $\MA$ at distance $6$ in the incidence graph of $\MA$. In other words, we have to show that there is no line of $\MA$ connecting a point of $l$ to a point of $l'$. Easy computations show that any line connecting $l$ to $l'$ satisfies the set of equations (\ref{Grassmann}). This means that any such line is either an absolute line or is not a line of the conic $\mathbf{Q}$.
\end{proof}

We are now ready to prove the main theorem of this section.

\begin{theorem}\label{main}
    Let $\mathbf{Q}$ be a quadric in a 7-dimensional projective space $\mathbf{P}$ over a finite field $\mathbb{F}$ of cardinality $f$ and let $\Gamma$ be its associated geometry with diagram $D_4$. Let $\sigma$ be an automorphism of $\mathbb{F}$ of order $o(\sigma) = 1$ or $3$ and let $\alpha$ be a triality of $\Gamma$ of type $(I_\sigma)$. The moving absolute geometry $M\Gamma_\alpha$ is a flag-transitive geometry with the following Buekenhout diagram where $f=k^{o(\sigma)}$.

    \begin{center}

    \begin{tikzpicture}
    
    \filldraw[black] (-3,0) circle (2pt)  node[anchor=north]{$k$};
    \filldraw[black] (3,0) circle (2pt)  node[anchor=north]{$(k+1)f^2-1$};
    \draw (-3,0) -- (3,0) node [midway, above = 3pt, fill=white]{$5 \; \; \; \; \; 3 \; \; \; \; \; 6$};
    \filldraw[black] (3,-1.2) circle (0pt) node[anchor=south]{$(k^2f^2 +kf +1)(f+1)f^2$} ;
    \filldraw[black] (-3,-1.2) circle (0pt) node[anchor=south]{$(k^2f^2 +kf +1)(f+1)$} ;
    \end{tikzpicture}
    \end{center}
    Moreover, the group $G$ acts flag-transitively on $\MA_\alpha$ where $G$ is $G_2(k)$ when $o(\sigma) = 1$ and $^3D_4(k)$ when $o(\sigma)=3$.
\end{theorem}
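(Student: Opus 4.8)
The plan is to read off all the numerical data of the diagram from the lemmas already established and then concentrate on the edge label $(d_P,g,d_L)=(5,3,6)$, which carries the only real content. First, $\MA$ is a geometry because each line carries $k+1\geq 1$ absolute points (Lemma~\ref{lem:movingLine}) and each point lies on $(k+1)f^2\geq 1$ lines (Lemma~\ref{lem:movingLinesPerPoint}); flag-transitivity is exactly Lemma~\ref{lem:flagTrans}, and the group realizing it is the collineation group $G$ preserving $\mathbf{Q}$ and $\alpha$, which by the discussion at the end of Section~\ref{subsec:conic} is $G_2(k)$ when $o(\sigma)=1$ and $^3D_4(k)$ when $o(\sigma)=3$. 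For the vertices: a flag of cotype $P$ is a line, whose residue has $k+1$ points by Lemma~\ref{lem:movingLine}, so $s_P=k$; a flag of cotype $L$ is a point, through which pass $(k+1)f^2$ lines by Lemma~\ref{lem:movingLinesPerPoint}, so $s_L=(k+1)f^2-1$. The element counts $n_P=(k^2f^2+kf+1)(f+1)$ and $n_L=(k^2f^2+kf+1)(f+1)f^2$ are Lemmas~\ref{lem:absolutePoints} and~\ref{lem:movingLines}. This disposes of everything except the three integers on the edge.

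For the gonality I would first rule out digons: two absolute points span a unique line of $\mathbf{P}$, so they lie on at most one common moving line and the incidence graph has no $4$-circuit, giving $g\geq 3$. To see $g=3$ I would exhibit a triangle inside a special plane: fixing an absolute point $P$ and three of the $k+1$ absolute lines through $P$ in $P\omega$ (possible since $k\geq 2$), and choosing an absolute point $Q_i\neq P$ on each, the three joining lines $(Q_iQ_j)$ lie in $P\omega$, avoid $P$, and hence are moving lines by the special-plane lemma, forming a $6$-circuit; thus $g=3$.

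The diameters I would extract from the identification of the point set of $\MA$ with the point set of the generalized hexagon $\Gamma_\alpha$. Unwinding Lemma~\ref{lem:movingLine}, two absolute points lie on a common moving line precisely when they are at distance $4$ in $\Gamma_\alpha$: their unique common neighbour $R$ is the apex of the special plane $R\omega$ carrying the line, and conversely any line joining two such points avoids $R$ and so is moving by the special-plane lemma. Hence the collinearity graph of $\MA$ is the distance-$4$ graph of $\Gamma_\alpha$. The \emph{key claim} is that this collinearity graph has diameter $2$: points at distance $4$ in $\Gamma_\alpha$ are already collinear, points at distance $2$ (on a common absolute line) acquire a common $\MA$-neighbour by Lemma~\ref{lem:incidentVertices}, and opposite points must be shown to admit a common $\MA$-neighbour as well, i.e.\ a third absolute point at distance $4$ from both in $\Gamma_\alpha$. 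Granting the claim, any two absolute points are at incidence-distance $\leq 4$, whence any point is at distance $\leq 5$ from any line and any two lines at distance $\leq 6$; thus $d_P\leq 5$ and $d_L\leq 6$. The matching lower bounds come from Lemma~\ref{lem:dist6}: a pair of lines at distance $6$ gives $d_L\geq 6$ directly, and the point occupying position $5$ along a geodesic between them realizes $d_P\geq 5$. Hence $(d_P,g,d_L)=(5,3,6)$, completing the diagram.

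The main obstacle is the opposite-points case of the key claim — producing, for two absolute points at distance $6$ in $\Gamma_\alpha$, a third absolute point at distance $4$ from both — since Lemma~\ref{lem:incidentVertices} only covers the distance-$2$ case, and the naive concatenation of geodesics yields collinearity-distance $3$ rather than the required $2$. I expect to close this with the standard distance-distribution combinatorics of a generalized hexagon (each line through one of the two points meets the other point at a controlled distance), possibly using flag-transitivity to reduce to a single representative configuration. A secondary subtlety, already localized in Lemma~\ref{lem:dist6}, is that the existence of a distance-$6$ pair is proved by the counting comparison of $n_L$ against the bound of Lemma~\ref{lem:counting} when $o(\sigma)=3$ and by an explicit Grassmann-coordinate computation when $\sigma=\mathrm{id}$, with the finitely many small fields verified by machine; care is needed to confirm these arguments cover every $(f,k)$ with $f=k^{o(\sigma)}$ and $k\geq 2$.
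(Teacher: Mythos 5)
Your overall architecture matches the paper's: the vertex labels, element counts, flag-transitivity and the identification of $G$ are read off from the same lemmas; the gonality is settled by a triangle of moving lines (yours lies inside a single special plane $P\omega$, the paper's joins alternate vertices of an absolute hexagon --- both work, though you should add a word ensuring your three points $Q_1,Q_2,Q_3$ are not collinear, which is easy since each absolute line through $P$ carries $f+1\geq 3$ absolute points and at most one of them lies on $(Q_1Q_2)$); and the lower bounds come from Lemma~\ref{lem:dist6} exactly as in the paper, with your extraction of $d_P\geq 5$ from a geodesic between two distance-$6$ lines being, if anything, a little cleaner than the paper's separate point--line construction. Your identification of the collinearity graph of $\MA$ with the distance-$4$ graph of $\Gamma_\alpha$ is also correct and is implicit in the paper's use of Lemma~\ref{lem:movingLine}.

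The genuine gap is the one you flag yourself: the opposite-points case of your key claim, namely that two absolute points $P_1,P_2$ at distance $6$ in $\Gamma_\alpha$ admit a common neighbour in $\MA$. You defer this to unspecified ``distance-distribution combinatorics of a generalized hexagon,'' but without it neither $d_P\leq 5$ nor $d_L\leq 6$ is established, so the proof is incomplete as written. The missing argument is short and you should adopt the paper's version: place $P_1$ and $P_2$ as opposite vertices of an absolute hexagon $\mathbf{H}$, and let $l$ be a line of $\mathbf{H}$ at distance $3$ from both, with $A$ and $B$ the two vertices of $\mathbf{H}$ on $l$ (so that $P_1,A,B,P_2$ are consecutive in the collinearity graph of $\mathbf{H}$). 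Since $f\geq 2$, the line $l$ contains a point $P\notin\{A,B\}$, and $P$ is absolute because every point of an absolute line is absolute. Now $P_1$ and $P$ both lie in the special plane $A\omega$ (spanned by the two hexagon lines through $A$), and $(P_1P)$ does not pass through $A$, so it is a moving line by your special-plane lemma; symmetrically $(PP_2)$ is a moving line in $B\omega$. Hence $P$ is a common $\MA$-neighbour of $P_1$ and $P_2$, closing the diameter-$2$ claim and with it the whole theorem.
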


\begin{proof}
By Lemma~\ref{lem:flagTrans}, $\MA$ is flag-transitive.
    We already showed that the number of points, lines, points per line and lines per point are as indicated in the diagram (see Lemmas~\ref{lem:absolutePoints},~\ref{lem:movingLines},~\ref{lem:movingLine},~\ref{lem:movingLinesPerPoint} respectively). It remains thus to show that $d_P = 5, g = 3$ and $d_L = 6$.
    Let $P_1$ and $P_2$ be two points of $\mathbf{Q}$. Then, there exists an absolute hexagon $\mathbf{H}$ such that $P_1$ and $P_2$ are vertices of $\mathbf{H}$. Fix this hexagon once and for all. We already showed the existence of the dashed line in figure \ref{fig:MovingHexagon}. Therefore, there are triangles in $\MA$. As all the lines are lines of a projective space, no two lines have two points in common and the gonality $g$ of $M\Gamma$ must then be $3$.

    We claim that $P_1$ and $P_2$ are at distance at most $4$ in the incidence graph of $\MA$. Suppose first that $P_1$ and $P_2$ are adjacent in $\mathbf{H}$. Then, the distance between $P_1$ and $P_2$ is $4$ by Lemma~\ref{lem:incidentVertices}. If they are opposite vertices of $\mathbf{H}$, their distance in the incidence graph is also $4$. Indeed, let $l$ be a line of $\mathbf{H}$ at distance $3$ from both $P_1$ and $P_2$. Then, there is a point $P$ on $l$ which is not a vertex of $\mathbf{H}$, since even in the smallest case of $f=2$ there are $3$ points per line. Since every point on an absolute line is an absolute point, $P$ is an absolute point. Moreover, the lines $P_1P$ and $PP_2$ are lines of $\mathbf{Q}$ since any $3$ consecutive vertices of an  hexagon are in the plane spanned by the two lines emanating from the middle vertex. The lines $P_1P$ and $PP_2$ are moving lines, else we would find triangles in the absolute. Such a path of length $2$ between two absolute points is illustrated by the dotted lines in Figure \ref{fig:MovingHexagon}.

    This shows that the maximal distance between two absolute vertices in the incidence graph of $M\Gamma$ is $4$. This implies that the maximal distance between a vertex $P$ and a moving line $l$ is $3$ or $5$.
 
    It cannot be $3$.
    Indeed, take a point $P$ to be a vertex of $\mathbf{H}$ and $l$ to be the line between $P_1$ and $P_2$ where $P_1$ is a neighbor of $P$ in $\mathbf{H}$ and $P_2$ is the opposite vertex of $P$ in $\mathbf{H}$. Then, by the one or all axiom, the only line joining $P$ to a point of $l$ is $PP_1$ (since $PP2$ cannot be a line of $\mathbf{Q}$). Therefore, there is no line of $\MA$ connecting $P$ to a point of $l$ and thus their distance is strictly greater than $3$. This proves that $d_P = 5$.

    It only remains to show that $d_L = 6$. Since $d_P = 5$, $d_L$ can only be $5$ or $6$. Lemma \ref{lem:dist6} shows the existence of two lines at distance $6$ from each other. Hence $d_L = 6$.
\end{proof}

We end this section by showing how to construct the moving absolute geometries $\MA$ associated to the quadric $\mathbf{Q}$ using a computer. The group $G$ is the group of collineations preserving the triality $\alpha$. Depending on whether $\sigma$ is the identity or not, the group $G$ is isomorphic to $G_2(q)$ or to $^3D_4(q)$ (see \cite[section 6]{tits1959trialite} for more details). 

\begin{lemma} \label{lem:stab}
    Let $H$ be the stabilizer in $G$ of an absolute point $P$. Then $H$ acts transitively on the $f^2$ lines of $\MA$ contained in $P \omega$.
\end{lemma}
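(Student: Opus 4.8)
The plan is to reduce the statement to the transitivity result of Tits on triples of absolute points, exactly in the spirit of the proof of Lemma~\ref{lem:flagTrans}. First I would recall the description of the relevant lines: as established in the lemma preceding this one, the $f^2$ lines of $\MA$ contained in the special plane $P\omega$ are precisely the lines of the projective plane $P\omega$ that avoid $P$, and each such line meets each of the $k+1$ absolute lines through $P$ lying in $P\omega$ in exactly one point. Since every point on an absolute line is an absolute point, every line $l$ of $\MA$ in $P\omega$ carries $k+1 \geq 3$ distinct absolute points (using Lemma~\ref{lem:movingLine}), each of which is joined to $P$ by an absolute line contained in $P\omega$.

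Given two such lines $l$ and $l'$, I would choose two distinct absolute points $Q, Q'$ on $l$ and two distinct absolute points $R, R'$ on $l'$. By the previous paragraph $(PQ)$, $(PQ')$, $(PR)$ and $(PR')$ are absolute lines, so $(Q,P,Q')$ and $(R,P,R')$ are two triples of exactly the kind treated in~\cite[Theorem 6.2.5]{tits1959trialite}. That theorem then provides a collineation $g \in G$ sending $(Q,P,Q')$ to $(R,P,R')$. Because the middle term of the triple is preserved, $g(P)=P$, so $g \in H$. As $g$ is a collineation of $\mathbf{P}$, it maps the projective line $(QQ')=l$ to the line $(g(Q)\,g(Q'))=(RR')=l'$, and since $g$ fixes $P$ and preserves $\alpha$ it stabilises $P\omega=P^\alpha\cap P^{\alpha^2}$, so $l'=g(l)$ is again one of the $f^2$ lines of $\MA$ in $P\omega$ (this last remark merely confirms consistency and is not needed for the conclusion, as $l'$ was chosen in $P\omega$ from the start). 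Hence $H$ acts transitively on these $f^2$ lines.

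I do not expect a genuine obstacle in this argument, since everything rests on the already-available transitivity of $G$ on triples with absolute legs. The only point requiring a little care is to verify that each line of $\MA$ in $P\omega$ really furnishes such a triple -- that is, that two of its absolute points are joined to $P$ by absolute lines -- and that the line is recovered from any two of its points; both facts are immediate from the structure of the special plane recalled above and from the fact that two distinct points span a unique projective line. The decisive step, on which the whole proof hinges, is therefore the invocation of~\cite[Theorem 6.2.5]{tits1959trialite} together with the observation that the fixed middle term of the triple forces $g \in H$.
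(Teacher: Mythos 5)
Your proof is correct and takes essentially the same route as the paper: both reduce the claim to Tits' transitivity on triples $(Q,P,Q')$ of absolute points with $(PQ)$ and $(PQ')$ absolute lines, noting that a line of $\MA$ in $P\omega$ is spanned by two of its absolute points, each of which is joined to $P$ by one of the $k+1$ absolute lines in $P\omega$, so that the stabilizer $H$ of the middle term carries one such line to any other. The paper packages this slightly differently (fixing two absolute lines $l_1,l_2$ through $P$ and parametrising the $f^2$ lines by pairs in $(l_1\setminus\{P\})\times(l_2\setminus\{P\})$), but the underlying argument is the same.
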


\begin{proof}
    We already mentioned in Lemma~\ref{lem:flagTrans} that $G$ acts transitively on triples of absolute points $(Q,Q',Q'')$ such that $(Q',Q)$ and $(Q'',Q')$ are absolute lines. Then, if we fix an absolute point $P$ and two absolute lines $l_1$ and $l_2$ containing $P$, we immediately deduce that $H$ still acts transitively on those of the above triples with $Q'= P$, $Q \in l_1$ and $Q'' \in l_2 $. It suffices to notice that any of the $f^2$ lines of $\MA$ can be written as $(QQ'')$ for some $Q \in l_1$ and $Q'' \in l_2$.
\end{proof}

The stabiliser in $G$ of an absolute point $P$ is the same whether we consider the classical absolute or our moving absolute geometry. Therefore, it is usually very easy to find such stabilizer for any known representations of $G$. By Lemma \ref{lem:stab} we know that the stabilizer $K'$ of a line of $\MA$ in $P \omega$ is a subgroup of index $f^2$ of $H$. We can thus find $K'$ by looking at the list of maximal subgroups of $H$. Let $l$ by a line of $\MA$ incident to $P$ so that $\{P,l\}$ is a flag. Then, since $G$ acts transitively of $\MA$, the stabilizer of $l$ must be a conjugate of $K'$. We can then look for a suitable conjugate of $K'$, namely a conjugate $K$ of $K'$ such that $H \cap K$ has the right cardinality (i.e. $\frac{|H|}{|H \cap K|} = (k+1)f^2$). The moving absolute geometry $\MA$ is then obtained as the coset geometry $\Gamma(G, \{H,K\})$.

\subsection{Maps of class III}

In their recent article \cite{leemans2022incidence}, Leemans and Stokes showed how to construct reflexible maps having trialities but no dualities (i.e maps of class III), using the simple group $L_2(q^3)$ with $q = p^n$ for a prime number $p$.

From one of these maps they also show how to construct an incidence geometry $\Delta$ having as elements the vertices, edges, faces and Petrie paths of the map. A face and a Petrie path are considered incident if they share a an edge and the rest of the incidence relation is given by symmetrized inclusion. They then show that, since $\Delta$ is constructed from a class III map, it admits trialities but no dualities in the sens of incidence geometry also. We recall the precise construction of such geometries.

Let $G = L_2(q^3)$ and let $\rho_0,\rho_1,\rho_2$ be three involutions generating $G$ and let $\alpha \in \Out(G)$ be an outer automorphism of order $3$ such that

\begin{enumerate}
    \item $\alpha$ cyclically permutes $\rho_0, \rho_2$ and $\rho_0\rho_2$,
    \item $\alpha$ fixes $\rho_1$,
    \item $\rho_0$ and $\rho_2$ commute,
    \item $\langle \rho_0, \rho_1, \rho_2 \rangle = G$, and
    \item there is no element of $\Aut(L_2(q^3))$ that swaps $\rho_0$ and $\rho_2$ and fixes $\rho_1$.
\end{enumerate}

For any choices of $q$,$\rho_0,\rho_1,\rho_2$ and $\alpha$ satisfying the above conditions Leemans and Stokes construct a coset geometry $\Delta = \Gamma(G;\{G_0,G_1,G_2,G_3\})$ where 
\begin{itemize}
    \item $G_0 = \langle \rho_0, \rho_1 \rangle$
    \item $G_1 = \langle \rho_0, \rho_2 \rangle$
    \item $G_2 = \langle \rho_1, \rho_2 \rangle$
    \item $G_3 = \langle \rho_1, \rho_0\rho_2 \rangle$
\end{itemize}

We start by analysing the classical absolute geometry of $\Delta$.

\begin{theorem}\label{thm: absoluteMaps}
    Let $\Delta = \Gamma(G;\{G_0,G_1,G_2,G_3\})$ be as above and let $\Delta_\alpha$ be its absolute with respect to the triality $\alpha$. Then $\Delta_\alpha$ is a graph which is the disjoint union of $\frac{|L_2(q)|}{2}$ paths of length $2$.
\end{theorem}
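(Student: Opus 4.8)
The plan is to read off the combinatorics of $\Delta_\alpha$ directly from the coset description of $\Delta$. First I would determine how $\alpha$ permutes the types. Using $\alpha(\rho_0)=\rho_2$, $\alpha(\rho_2)=\rho_0\rho_2$, $\alpha(\rho_0\rho_2)=\rho_0$ and $\alpha(\rho_1)=\rho_1$, one checks $\alpha(G_0)=G_2$, $\alpha(G_2)=G_3$, $\alpha(G_3)=G_0$ and $\alpha(G_1)=G_1$, so $\alpha$ acts on $\{0,1,2,3\}$ as the $3$-cycle $(0\,2\,3)$ fixing $1$. Thus $J$ has two classes, the orbit $\{0,2,3\}$ and the orbit $\{1\}$, and $\Delta_\alpha$ is a rank $2$ object whose ``points'' are the $\alpha$-invariant flags of type $\{0,2,3\}$ and whose ``lines'' are the $\alpha$-fixed elements of type $1$; its graph is the incidence graph between these. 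Since $G$ acts flag-transitively on $\Delta$ with trivial chamber stabiliser $\bigcap_i G_i=\langle\rho_1\rangle\cap\langle\rho_0,\rho_2\rangle=\{1\}$, the chambers of $\Delta$ are in bijection with $G$ via $h\mapsto C_h:=\{G_0h,G_1h,G_2h,G_3h\}$, and $\alpha$ acts on chambers by $C_h\mapsto C_{\alpha(h)}$.

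The engine of the proof is to count the flags (edges) of $\Delta_\alpha$ and then pin down local degrees. A flag of $\Delta_\alpha$ is an incident (point, line) pair, equivalently an $\alpha$-invariant chamber $C_h$; these are exactly the $C_h$ with $\alpha(h)=h$, so the flags of $\Delta_\alpha$ are in bijection with $\Fix(\alpha)$. Taking $\alpha$ to be the order-$3$ field (Frobenius) automorphism $x\mapsto x^q$ of $L_2(q^3)$ underlying the triality, its fixed subgroup is $\Fix(\alpha)=L_2(q)$, so $\Delta_\alpha$ has exactly $|L_2(q)|$ edges. It remains to show these edges organise themselves into $|L_2(q)|/2$ disjoint paths of length $2$, which I will establish by computing the degree of every vertex.

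For the local structure I would use two group-theoretic facts: $G_0\cap G_2\cap G_3=\langle\rho_1\rangle$ has order $2$, whereas $G_1=\langle\rho_0,\rho_2\rangle\cong V_4$ and $\alpha$ cyclically permutes its three involutions $\rho_0,\rho_2,\rho_0\rho_2$, so that $G_1\cap\Fix(\alpha)=\{1\}$. A point is a coset $\langle\rho_1\rangle h$ fixed by $\alpha$, and the two chambers extending the associated $\{0,2,3\}$-flag are $C_h$ and $C_{\rho_1 h}$, giving the two candidate incident lines $G_1h$ and $G_1\rho_1 h$ (distinct since $\rho_1\notin G_1$). The crucial point is that such a coset must actually lie inside $\Fix(\alpha)$: if $\alpha(h)h^{-1}=\rho_1$, then applying $\alpha$ and using $\alpha(\rho_1)=\rho_1$ gives $\alpha^2(h)\alpha(h)^{-1}=\alpha^3(h)\alpha^2(h)^{-1}=\rho_1$, and multiplying the three relations yields $\alpha^3(h)h^{-1}=\rho_1^3=\rho_1$; since $\alpha^3=\Id$ this forces $\rho_1=1$, a contradiction. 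Hence every absolute point has $h,\rho_1 h\in\Fix(\alpha)$, both incident lines are $\alpha$-fixed, and every point has degree exactly $2$; moreover the points are precisely the $|\Fix(\alpha)|/2=|L_2(q)|/2$ cosets of $\langle\rho_1\rangle$ contained in $\Fix(\alpha)$. Dually, for a line $G_1k$ the order-$3$ map $\alpha$ acts on the $4$-element coset $G_1k$ with a number of fixed points congruent to $4\equiv 1\pmod 3$, hence at least one; combined with $G_1\cap\Fix(\alpha)=\{1\}$ this shows every line is incident to exactly one absolute point, i.e. has degree $1$.

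Putting this together, the incidence graph of $\Delta_\alpha$ is bipartite with every point of degree $2$ and every line of degree $1$, so each connected component consists of one point joined to its two lines, i.e. a path $\ell-P-\ell'$ of length $2$; as there are $|L_2(q)|/2$ points there are $|L_2(q)|/2$ components. I expect the main obstacle to be the order-$3$ argument excluding ``isolated'' points — the step showing $\alpha(h)h^{-1}\in\langle\rho_1\rangle$ forces the whole coset into $\Fix(\alpha)$ — together with the care needed in the coset bookkeeping, in particular the bijection between chambers and $G$ and the identifications $G_0\cap G_2\cap G_3=\langle\rho_1\rangle$ and $G_1\cap\Fix(\alpha)=\{1\}$, on which the entire degree computation rests.
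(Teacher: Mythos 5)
Your strategy of reducing everything to coset bookkeeping founders on two points, and the structure you obtain for $\Delta_\alpha$ is not the correct one even though the number $|L_2(q)|/2$ happens to come out right. First, the bijection $h\mapsto C_h$ between $G$ and the chambers of $\Delta$ presupposes that every chamber has nonempty total intersection (equivalently, chamber-transitivity of $G$), and this fails here: the four cosets $G_0\rho_2$, $G_1$, $G_2\rho_0\rho_2$, $G_3\rho_0$ are pairwise incident (each of the required conditions, e.g.\ $\rho_0\in G_2G_0$ or $\rho_2\in G_3G_2$, follows from $\rho_0\rho_2=\rho_2\rho_0$), so they form a chamber, yet their common intersection is empty because $\rho_2\notin G_3$ and $\rho_0\notin G_2$. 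So flags of $\Delta_\alpha$ are not in bijection with $\Fix(\alpha)$. Second, and decisively, the claim that a fixed line $G_1k$ carries exactly one absolute point is false: the unique $\alpha$-fixed \emph{group element} inside the four-element set $G_1k$ has nothing to do with the number of absolute points incident to the \emph{element} $G_1k$ of $\Delta$. In fact both endpoints of a fixed edge are absolute: if $\alpha(e)=e$ and $v*e$, then $\alpha(v)$ and $\alpha^2(v)$ are also incident to $e$, and one checks (geometrically on the map, or in cosets using $\rho_0\in G_0G_2$, $\rho_0\rho_2\in G_0G_3$, $\rho_2\in G_2G_3$) that $\{v,\alpha(v),\alpha^2(v)\}$ is a flag, so $v$ is an absolute point. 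Every fixed line therefore has degree $2$, not $1$.

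The correct local picture is thus the opposite of yours: each fixed edge has two absolute endpoints, fixed edges come in pairs sharing one common absolute endpoint, and no third fixed edge meets the configuration; each component is a path with \emph{three} absolute points and two fixed edges, giving $3|L_2(q)|/2$ absolute points rather than your $|L_2(q)|/2$. Your configuration (one point of degree two joined to two lines of degree one) is not even a graph in the sense intended by the statement and contradicts Figure~\ref{fig:L2(8)}, where for $q=2$ the classical absolute has nine vertices and six edges forming three paths. The one step of your argument that does survive, and that coincides with the paper's counting step, is that a fixed coset $G_1k$ contains exactly one $\alpha$-fixed element, whence there are exactly $|\Fix(\alpha)|=|L_2(q)|$ fixed edges. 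What is missing is a correct local analysis around a fixed edge to replace your degree computation; the paper does this directly on the underlying map rather than through cosets, precisely because the coset picture you rely on is unavailable.
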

\begin{proof}
    We first show that $\Delta_\alpha$ is a union of path of length $2$ and then we will compute the number of edges of $\Delta_\alpha$.

    Suppose there is at least one edge $e$ fixed by $\alpha$ and label its endpoints by $v_1$ and $v_2$. First notice that $v_1$ and $v_2$ are absolute points. Indeed, since $\alpha$ fixes $e$ and preserves incidence, we get that $v_i * e$ implies that $\alpha(v_i) * e$ and $\alpha^2(v_i) * e$ for $i =1,2$. Label $\alpha(v_i)$ by $F_i$ and $\alpha^2(v_i)$ by $P_i$.
    
    Figure \ref{fig:fixedEdge} shows a local picture around the fixed edge $e$. This picture is to be understood as being drawn on the underlying surface of the original map of class III. 

    Among all edges coming out of $v_1$ or $v_2$ there is exactly one edge that is also incident to a triple ${v_i,F_i,P_i}$. We label that edge by $e'$. It is easy to see that $e'$ must also be fixed by $\alpha$. We thus showed that fixed edges appear in pairs. Label the other endpoint of $e'$ by $v_3$. It remains to show that no other edges among all the edges coming out of $v_1$,$v_2$ and $v_3$ can be fixed by $\alpha$.

    Let $F_3$ be the face containing $e'$ which is not $F_1$. Then, we notice that the set of all the edges around a vertex $v_i$ must be sent by $\alpha$ to the set of edges of $F_3$. This is because if an edge $x$ is incident to a vertex $v_i$, then $\alpha(x)$ must be incident to $\alpha(v_i) = F_i$. In the case of $v_1$ this already proves the claim, since the only edges of $F_1$ incident to $v_1$ are $e$ and $e'$. For $v_2$, there remains one edge $x \neq e$ incident to both $v_2$ and $F_2$. But $x$ is incident to $P_1$ so $\alpha(x)$ must be incident to $\alpha(P_1) = v_1$ and thus cannot be fixed.
    The case of $v_3$ is identical to the one of $v_2$. There is only one edge $y \neq e'$ which is incident to both $v_3$ and $F_3$. But $y$ is incident to $P_1$ again, so $\alpha(y)$ must be incident to $v_1$ and thus cannot be fixed. 
    This concludes the proof that $\Delta_\alpha$ is a union of paths of length 2.
    
    We will now show that the number of edges of $\Delta_\alpha$ is equal to $|L_2(q)|$. To do so we will show that there is a one-to-one correspondence between edges of $\Delta_\alpha$ and fixed points of the action of $\alpha$ on $L_2(q^3)$. The automorphism $\alpha$ being a field automorphism, it fixes elementwise a subfield subgroup of $L_2(q^3)$ isomorphic to $L_2(q)$.
    Here we use the fact that $\Gamma$ is a coset geometry. Since $\rho_0$ and $\rho_2$ commute, the maximal parabolic subgroup $G_1 = \{Id_G, \rho_0, \rho_2, \rho_0\rho_2\} = \{Id_G, \rho_0, \alpha(\rho_0), \alpha^2(\rho_0)\}$.
    The edges of $\Gamma$ are thus of the form $G_1 \cdot x = \{x, \rho_0 x, \alpha(\rho_0) x, \alpha^2(\rho_0) x\}$ for some $x \in G$. Suppose such a $G_1 \cdot x$ is fixed by $\alpha$. Then $G_1 \cdot x$ must be a union of $\alpha$-orbits. Since the orbits of $\alpha$ on $G$ have length $1$ or $3$, $G_1 \cdot x$ can only be made of $4$ fixed points or $1$ fixed point and an orbit of size $3$. The former case is easily seen to be impossible as it would imply that $G_1$ is also made of $4$ fixed points and thus $\alpha$ should be the identity. Therefore, every fixed $G_1 \cdot x$ contains exactly $1$ fixed point of $\alpha$. And of course, if $x$ is fixed by $\alpha$, then $G_1 \cdot x$ is also fixed as $x \in G_1 \cdot x \cap \alpha(G_1 \cdot x)$.
\end{proof}

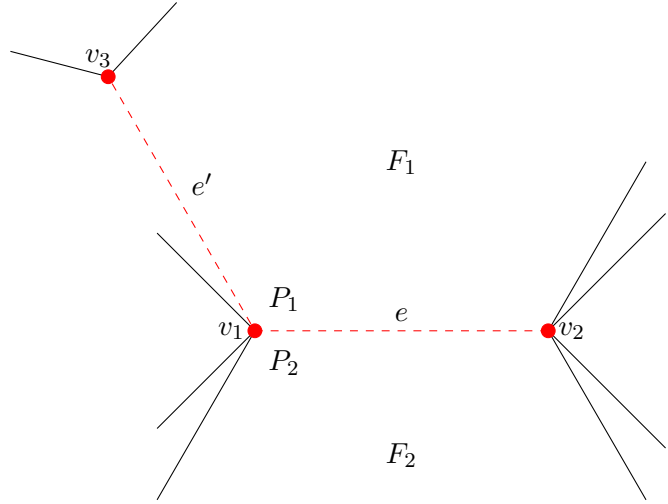
\begin{figure}
\centering
\begin{tikzpicture}[scale = 1.3]

\draw[red,dashed] (3,0) -- (0,0) -- (-1.5,2.6) ;
\draw[black] (0,0) -- (-1,1) ;
\draw[black] (0,0) -- (-1,-1) ;
\draw[black] (0,0) -- (-1,-1.73) ;

\draw[black] (3,0) -- (4,1.73) ;
\draw[black] (3,0) -- (4.2,1.2) ;
\draw[black] (3,0) -- (4.2,-1.2) ;
\draw[black] (3,0) -- (4,-1.73) ;

\draw[black] (-1.5,2.6) -- (-2.5,2.86) ;
\draw[black] (-1.5,2.6) -- (-0.8,3.36) ;

\filldraw[red] (0,0) circle (2pt)  ;
\filldraw[red] (3,0) circle (2pt) ;
\filldraw[red] (-1.5,2.6) circle (2pt) ;
\filldraw[black] (0,0) circle (0pt) node[anchor=east]{$v_1$} ;
\filldraw[black] (3,0) circle (0pt) node[anchor=west]{$v_2$} ;
\filldraw[black] (-1.6,2.6) circle (0pt) node[anchor=south]{$v_3$} ;

\filldraw[black] (-0.75,1.5) circle (0pt) node[anchor=west]{$e'$} ;
\filldraw[black] (1.5,0) circle (0pt) node[anchor=south]{$e$} ;

\filldraw[black] (1.5,1.5) circle (0pt) node[anchor=south]{$F_1$} ;
\filldraw[black] (1.5,-1.5) circle (0pt) node[anchor=south]{$F_2$} ;

\filldraw[black] (0.3,0.1) circle (0pt) node[anchor=south]{$P_1$} ;
\filldraw[black] (0.3,-0.1) circle (0pt) node[anchor=north]{$P_2$} ;

\end{tikzpicture}
   \caption{Local picture around a fixed edge. The dashed edges and their endpoints correspond to elements of the absolute geometry.}
\label{fig:fixedEdge}
\end{figure}

This entirely characterises the absolute geometries for the incidence geometries obtained from class III maps constructed using $L_2(q)$ in~\cite{leemans2022incidence}. Remark that in this case the absolute geometry is independent of the choices of generators $\rho_0,\rho_1$ and $\rho_2$ and of the triality $\alpha$. 

The core of the issue here is that the residues of $\Delta$ are too small for the absolute geometry to be interesting. The moving absolute geometry is not so controlled by the size of the residue, as we will show with some examples below.

We have written a {\sc Magma} program that computes the moving absolute geometry $M\Delta_\alpha$ from $L_2(q^3)$ together with a generating set $\{\rho_0,\rho_1, \rho_2\}$. The algorithm used by the program follows the five steps described below: 
\begin{enumerate}[1.]
    \item Compute the coset geometry $\Gamma(G, \{G_0,G_1,G_2,G_3\})$.
    \item Find a triality $\alpha \in \Aut(G)$: 
 The group $G = L_2(q^3)$ is given as a permutation group over $q^3+1$ points. The group $G$ is thus a subgroup of $\Sym(q^3+1)$. Construct the centraliser $C = C_{\Sym(q^3+1)}(G)$ of $G$ in $\Sym(q^3+1)$. Elements of $C$ can be seen as automorphisms of $G$ via their action by conjugation. Since $\Aut(G)$ is an extension of order $3$ of $G$, as long as $C \neq G$ we know that $C = \Aut(G)$. Look for an element of order $3$ in $C\setminus G$ that centralizes $\rho_1$ and permutes $\rho_0,\rho_2$ and $\rho_0\rho_2$. That will be the triality $\alpha$.

    \item Compute the absolute points: for each coset $G_0 \cdot x$, check if  $G_0 \cdot x \cap (G_0 \cdot x)^\alpha$ is empty or not. Keep the ones for which the intersection is not empty. Note that this is sufficient as $\alpha$ is a triality and thus $v * \alpha(v)$ implies $\alpha^{-1}(v) = \alpha^2(v) * v$. 

    \item Compute the edges: given a coset $G_1 \cdot x$, check if $\alpha x \alpha ^{-1} x^{-1}$ is in $G_1$ or not. Keep the ones for which $\alpha x \alpha ^{-1} x^{-1}$ is not in $G_1$. Indeed, $(G_1 \cdot x)^\alpha = \alpha G_1 \cdot x \alpha ^{-1} = \alpha G_1 \cdot (\alpha ^{-1} \alpha) x \alpha ^{-1} = G_1 \cdot \alpha x \alpha ^{-1}$.

    \item Match each moving edge with its endpoints and create a graph.
\end{enumerate}

\begin{figure}
\centering
\begin{tikzpicture}[scale = 2.0]

\draw[black] (0,0) -- (0,1) -- (-1.5,1.5) --(-0.5,1.5) -- (1,1) --(0,0);
\draw[black] (0,1) -- (1,1);
\draw[black] (0,0) -- (-1.5,0.5)--(-1.5,1.5);
\draw[black] (-1.5,0.5) -- (-0.5,1.5);

\filldraw[black] (0,0) circle (1.5pt) ;
\filldraw[black] (0,1) circle (1.5pt) ;
\filldraw[black] (1,1) circle (1.5pt) ;

\filldraw[black] (-1.5,0.5) circle (1.5pt) ;
\filldraw[black] (-1.5,1.5) circle (1.5pt) ;
\filldraw[black] (-0.5,1.5) circle (1.5pt) ;

\draw[blue,dashed] (0.0,2.5) -- (-1.5,1.5);
\draw[blue,dashed] (0.0,2.5) -- (1,1);

\draw[blue,dashed] (1.5,0.2) -- (-0.5,1.5);
\draw[blue,dashed](1.5,0.2) -- (0,0);

\draw[blue,dashed] (-2,-0.5) -- (-1.5,0.5);
\draw[blue,dashed] (-2,-0.5) -- (0,1);

\filldraw[black] (0.0,2.5) circle (1.5pt) ;
\filldraw[black] (1.5,0.2) circle (1.5pt) ;
\filldraw[black] (-2,-0.5) circle (1.5pt) ;

\end{tikzpicture}
   \caption{The two absolute geometries for $G = L_2(8)$. The classical absolute consists of the  vertices and the dashed edges and the moving absolute geometry of the vertices and the full edges.}
\label{fig:L2(8)}
\end{figure}
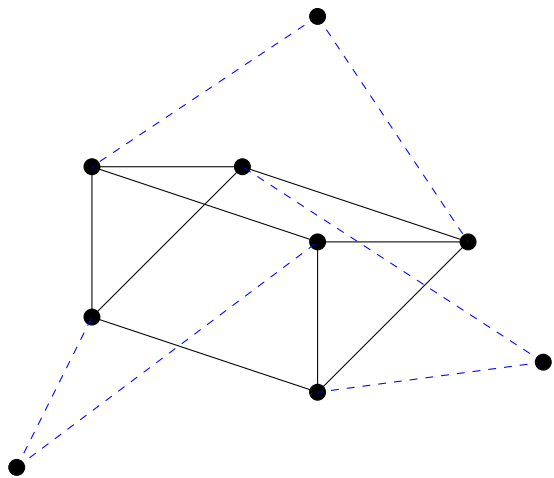

We end this article by mentioning a few examples of the moving absolute geometries computed by the above algorithm. Remark that the moving absolute geometry depends not only of the cardinality $q$ of the underlying field $\mathbb{F}$ but also of the choice of the generating set $\{\rho_0,\rho_1,\rho_2\}$ and of the triality $\alpha$. We also adopt the convention that if $M \Delta_\alpha$ contains isolated vertices, we remove them.

\begin{enumerate}
    \item For $G = L_2(2^3)$ there is only one choice, up to conjugation, of generating set $\{\rho_0,\rho_1, \rho_2\}$. We know that $\Delta_\alpha$ is always a disjoint union of $3$ paths of length $2$. In this case $M\Delta_\alpha$ is a prism with triangular basis. In Figure~\ref{fig:L2(8)} we show $M\Delta_\alpha$ and the dashed lines show how $\Delta_\alpha$ attaches to $M\Delta_\alpha$ inside of $\Delta$.
    \item For $G = L_2(4^3)$, there is a moving absolute geometry $M\Delta_\alpha$ which is a disjoint union of $15$ edges and $12$ pentagons. It has $90$ vertices if degree $3$ and $75$ edges, its girth is equal to $5$, diameter equal to $7$ and its automorphism group is isomorphic to $2 \times A_5$.
    \item For $G = L_2(5^3)$ the following moving absolute geometries $M\Delta_\alpha$ appear:
    \begin{enumerate}[1.]

    \item A graph with $60$ vertices, with girth equal to $3$, diameter equal to $8$ and automorphism group isomorphic to $2 \times A_5$. This graph is vertex transitive.
    \item A graph with $30$ vertices of degree $4$, $60$ edges which is arc-transitive and has Buekenhout diagram:
    \begin{center}
    \begin{tikzpicture}
    
   \filldraw[black] (-3,0) circle (2pt)  node[anchor=north]{};
   \filldraw[black] (3,0) circle (2pt)  node[anchor=north]{};
    \draw (-3,0) -- (3,0) node [midway, above = 3pt, fill=white]{$7\; \; \; \; \; 5 \; \; \; \; \; 8$};
    \end{tikzpicture}
    \end{center}
    
    Its automorphism group is $2 \times \Sym(5)$.
    \end{enumerate}

    \item For $G= L_2(7^3)$, the following moving absolute geometries $M\Delta_\alpha$ appear :
    \begin{enumerate}[1.]
        \item A graph on $84$ vertices of degree $4$ which is admits a perfect matching. 
        \item A graph on $168$ vertices of degree $4$. It is connected, has girth equal to $3$, diameter equal to $9$ and automorphism group isomorphic to $L_2(7)$.
    \end{enumerate}

    \item For $G = L_2(9^3)$, there is a moving absolute geometries $M\Delta_\alpha$ which is a graph on $180$ vertices of degree $4$ and $360$ edges. It has Buekenhout diagram:

    \begin{center}

    \begin{tikzpicture}
    
   \filldraw[black] (-3,0) circle (2pt)  node[anchor=north]{};
   \filldraw[black] (3,0) circle (2pt)  node[anchor=north]{};
    \draw (-3,0) -- (3,0) node [midway, above = 3pt, fill=white]{$13 \; \; \; \; \; 5 \; \; \; \; \; 13$};
    \end{tikzpicture}
    \end{center}
        
    \end{enumerate}

\begin{thebibliography}{10}

\bibitem{abrams2022new}
L.~Abrams and J.~A. Ellis-Monaghan.
\newblock New dualities from old: generating geometric, {P}etrie, and {W}ilson
  dualities and trialities of ribbon graphs.
\newblock {\em Combin. Probab. Comput.}, 31(4):574--597, 2022.

\bibitem{archdeacon2014trinity}
D.~Archdeacon, M.~Conder, and J.~{\v{S}}ir{\'a}{\v{n}}.
\newblock Trinity symmetry and kaleidoscopic regular maps.
\newblock {\em Trans. Amer. Math. Soc.}, 366(8):4491--4512, 2014.

\bibitem{magma}
W.~Bosma, J.~Cannon, and C.~Playoust.
\newblock The {M}agma algebra system. {I}. {T}he user language.
\newblock {\em J. Symbolic Comput.}, 24(3-4):235--265, 1997.

\bibitem{buek}
F.~Buekenhout.
\newblock Diagrams for geometries and groups.
\newblock {\em J. Combin. Theory Ser. A}, 27(2):121--151, 1979.

\bibitem{buekenhout2013diagram}
F.~Buekenhout and A.~M. Cohen.
\newblock {\em Diagram geometry: related to classical groups and buildings},
  volume~57.
\newblock Springer Science \& Business Media, 2013.

\bibitem{Cartan}
E.~Cartan.
\newblock Le principe de dualit\'e et la th\'eorie des groupes simples et
  semi-simples.
\newblock {\em Bull. Sc. Math.}, 49:361--374, 1925.

\bibitem{fraser2018regular}
J.~Fraser, O.~Jeans, and J.~{\v{S}}ir{\'a}{\v{n}}.
\newblock Regular self-dual and self-petrie-dual maps of arbitrary valency.
\newblock {\em arXiv preprint arXiv:1807.11692}, 2018.

\bibitem{Freudenthal}
H.~Freudenthal.
\newblock {\em Oktaven, Ausnahmegruppen und Oktavengeometrie}.
\newblock Rijksuniversiteit Utrecht, Mathematisch Instituut, Utrecht, 1952.

\bibitem{jones2010maps}
G.~A. Jones and A.~Poulton.
\newblock Maps admitting trialities but not dualities.
\newblock {\em European J. Combin.}, 31(7):1805--1818, 2010.

\bibitem{jones1983operations}
G.~A. Jones and J.~S. Thornton.
\newblock Operations on maps, and outer automorphisms.
\newblock {\em J. Combin. Theory Ser. B}, 35(2):93--103, 1983.

\bibitem{leemans2022incidence}
D.~Leemans and K.~Stokes.
\newblock Incidence geometries with trialities coming from maps with {Wilson}
  trialities.
\newblock {\em Innov. Incidence Geom.}, to appear.

\bibitem{Porteous}
I.~R. Porteous.
\newblock {\em Topological Geometry}.
\newblock Cambridge University Press, 2nd edition edition, 1981.

\bibitem{richter2012self}
R.~B. Richter, J.~{\v{S}}ir{\'a}{\v{n}}, and Y.~Wang.
\newblock Self-dual and self-petrie-dual regular maps.
\newblock {\em J. Graph Theory}, 69(2):152--159, 2012.

\bibitem{Study1}
E.~Study.
\newblock {\em Geometrie der Dynamen}.
\newblock Teubner, Leipzig, 1903.

\bibitem{Study2}
E.~Study.
\newblock Grundlagen und ziele der analytischen kinematik.
\newblock {\em Sitzungsberichte der Berliner Math. Gesellschaft}, 12:36--60,
  1913.

\bibitem{tits1959trialite}
J.~Tits.
\newblock Sur la trialit{\'e} et certains groupes qui s'en d{\'e}duisent.
\newblock {\em Publications Math{\'e}matiques de l'IH{\'E}S}, 2:13--60, 1959.

\bibitem{van2012generalized}
H.~Van~Maldeghem.
\newblock {\em Generalized polygons}.
\newblock Springer Science \& Business Media, 2012.

\bibitem{wilson1979operators}
S.~Wilson.
\newblock Operators over regular maps.
\newblock {\em Pacific J. Math.}, 81(2):559--568, 1979.

\end{thebibliography}
\end{document}